\titleformat*{\section}{\Large\bfseries}
\titleformat*{\subsection}{\bfseries}
\definecolor{frame}{rgb}{0.9,0.9,0.9}
\definecolor{border}{rgb}{0.7,0.7,0.7}
\numberwithin{equation}{section}
\numberwithin{figure}{section}
\numberwithin{table}{section}
\newtheorem{theorem}{Theorem}[section]
\newmdtheoremenv[linewidth=1pt,backgroundcolor=frame,linecolor=border,innertopmargin=-0.1cm]{assumption}{Assumption}
\definecolor{mcg}{rgb}{0.1333,0.5451,0.1333}
\definecolor{msm}{rgb}{0.71,0.243,0.945}
\definecolor{mb}{rgb}{0.169,0.169,0.984}
\newcommand{\revision}[1]{{\color{black}{#1}}}
\newcommand{\change}[1]{#1}
\newif\ifsubset
\newcommand{\redub}{}
\def\redub#1{%
  \@ifnextchar_%
    {\@redub{#1}}
    {\@latex@warning{Missing argument for \string\redub}\@redub{#1}_{}}%
}
\def\@redub#1_#2{%
    \underbracket[0.5pt]{\color{black}#1}_{\displaystyle\color{black} #2}%
}
\newmdtheoremenv[linewidth=1pt,backgroundcolor=frame,linecolor=border,ntheorem]{summary}{Summary.}
\begin{document}

\title{Finite volume schemes for multilayer diffusion}
\date{}
%

\author{Nathan G. March and Elliot J. Carr\footnote{Corresponding author: \href{mailto:elliot.carr@qut.edu.au}{elliot.carr@qut.edu.au}.}\\ School of Mathematical Sciences, Queensland University of Technology (QUT),\\ Brisbane, Australia.}
\maketitle

\begin{abstract} 
This paper focusses on finite volume schemes for solving multilayer diffusion problems. We develop a finite volume method that addresses a deficiency of recently proposed finite volume/difference methods, which consider only a limited number of interface conditions and do not carry out stability or convergence analysis. Our method also retains second-order accuracy in space while preserving the tridiagonal matrix structure of the classical single-layer discretisation. Stability and convergence analysis of the new finite volume method is presented for each of the three classical time discretisation methods: forward Euler, backward Euler and Crank-Nicolson. We prove that both the backward Euler and Crank-Nicolson schemes are always unconditionally stable. The key contribution of the work is the presentation of a set of sufficient stability conditions for the forward Euler scheme. Here, we find that to ensure stability of the forward Euler scheme it is not sufficient that the time step $\tau$ satisfies the classical constraint of $\tau\leq h_{i}^2/(2D_{i})$ in each layer (where $D_{i}$ is the diffusivity and $h_{i}$ is the grid spacing in the $i$th layer) as more restrictive conditions can arise due to the interface conditions. The paper concludes with some numerical examples that demonstrate application of the new finite volume method, with the results presented in excellent agreement with the theoretical analysis.\\
\\
\textbf{Keywords:}~multilayer diffusion; finite volume scheme; stability and convergence; Gershgorin circle theorem; interface conditions
\end{abstract}

\section{Introduction}
Many industrial, environmental and biological problems involve diffusion processes across layered materials. For example, heat conduction in composites \citep{monte_2000,mikhailov_1983,mulholland_1972}, tumour growth across the white and grey matter components of the brain \citep{asvestas_2014,mantzavinos_2014}, contaminant transport across layered \change{soils} \citep{liu_1998,trefry_1999} and thermal conduction through skin layers during burning \cite{simpson_2017} all involve multilayer diffusion processes. Additionally, layered diffusion is of interest to the applied mathematics community as it can be thought of as a simple example of a multiscale problem when the number of layers is large \citep{carr_2016,carr_2017a}. These applications have led to a recent flourish in research activity focussed on analytical and numerical methods for solving mathematical models of multilayer diffusion \cite{carr_2016,carr_2016c,sheils_2016,rodrigo_2016,hickson_2011,kaoui_2018,gudnason_2018}.

This paper focusses on the numerical solution of the multilayer diffusion problem described as follows. Consider a diffusion process defined on an interval $[l_0,l_m]$ partitioned into $m$ distinct layers, such that $l_0<l_1<\hdots < l_{m-1} <l_m$, where $x=l_i$ ($i = 1,\hdots,m-1$) specifies the location of the interface between the $i$th and ($i+1$)th layers (see Figure \ref{fig:partition}). The resulting domain is denoted $[l_0,l_1,\hdots,l_{m-1},l_m$]. In this work, we define a linear diffusion equation on each layer together with general initial and boundary conditions:
\begin{gather}
\label{eq:PDE}
\frac{\partial u_i}{\partial t} = D_i \frac{\partial^2 u_i}{\partial x^2}, \quad l_{i-1} < x < l_i, \quad t>0,\\
\label{eq:initial_conditions}
u_i(x,0) = f_i(x),\\
\label{eq:left_BC}
a_Lu_1(l_0,t) -b_L\frac{\partial u_1}{\partial x}(l_0,t) = c_L,\\
\label{eq:right_BC}
a_Ru_m(l_m,t) +b_R\frac{\partial u_m}{\partial x}(l_m,t) = c_R,
\end{gather}
where $u_i(x,t)$ is the solution in the $i$th layer at position $x$ and time $t$, $D_i >0$ is the diffusion coefficient in the $i$th layer, $f_i(x)$ specifies the initial solution in the $i$th layer at position $x$, and $a_L$, $b_L$, $c_L$, $a_R$, $b_R$ and $c_R$ are non-negative constants satisfying $a_L + b_L >0$ and $a_R + b_R > 0$. We neglect the special case of Neumann conditions on both boundaries (i.e. $a_L = a_R = 0$).
\begin{figure}[t]
\centering
\includegraphics[width=16cm]{./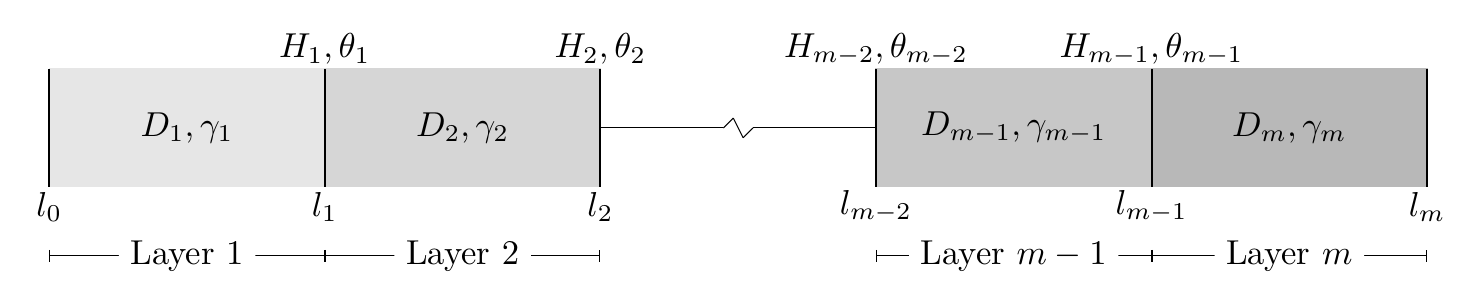}
\caption{Schematic diagram of a layered medium consisting of $m$ layers. The diffusion coefficient $D_i$ and conductivity coefficient $\gamma_i$ are constant in each layer ($i = 1,\hdots,m$) with the contact transfer and partition coefficients between the $i$th and $(i+1)$th layers denoted by $H_{i}$ and $\theta_i$ respectively.}
\label{fig:partition}
\end{figure}
A unique feature of multilayer problems are the internal boundary conditions that apply at the interfaces between adjacent layers. To close the problem (\ref{eq:PDE})--(\ref{eq:right_BC}), at each interface, $x = l_i$ ($i = 1,\hdots,m-1$), a pair of interface conditions \change{is} imposed, which we assume are chosen from one of the following four types:
\begin{enumerate}[(i)]
\item Type I: \begin{gather}
\label{eq:type1interface1}
u_i(l_i,t) =  u_{i+1}(l_i,t),\\
\label{eq:type1interface2}
D_i \frac{\partial u_i}{\partial x}(l_i,t) = D_{i+1} \frac{\partial u_{i+1}}{\partial x}(l_i,t).
\end{gather}
\item Type II: \begin{gather}
\label{eq:type2interface1}
D_i \frac{\partial u_i}{\partial x}(l_i,t) = H_i(u_{i+1}(l_i,t) - u_i(l_i,t)),\\
\label{eq:type2interface2}
D_{i+1} \frac{\partial u_{i+1}}{\partial x}(l_i,t) = H_i(u_{i+1}(l_i,t) - u_i(l_i,t)).
\end{gather}.
\item Type III: \begin{gather}
\label{eq:type3interface1}
u_i(l_i,t) = u_{i+1}(l_i,t),\\
\label{eq:type3interface2}
\gamma_{i} \frac{\partial u_{i}}{\partial x}(l_i,t) = \gamma_{i+1} \frac{\partial u_{i+1}}{\partial x}(l_i,t).
\end{gather}
\item Type IV:\begin{gather}
\label{eq:type4interface1}
u_i(l_i,t) = \theta_i u_{i+1}(l_i,t),\\
\label{eq:type4interface2}
D_{i} \frac{\partial u_{i}}{\partial x}(l_i,t) = D_{i+1} \frac{\partial u_{i+1}}{\partial x}(l_i,t).
\end{gather}
\end{enumerate}
Each of the four types of interface conditions (\ref{eq:type1interface1})--(\ref{eq:type4interface2}) model different physical processes at the interfaces and as a result find application to different industrial, environmental and biological problems. Both Type I and Type III conditions assume that the $i$th and $(i+1)$th layers are in perfect contact, that is, the solution is continuous across the $i$th interface. The difference is that equation (\ref{eq:type1interface2}) imposes continuity of the diffusive flux across the interface, whereas equation (\ref{eq:type3interface2}) allows for a more general formulation, where the flux depends on an arbitrary coefficient $\gamma_i > 0$ instead of the diffusion coefficient $D_i$. The latter interface condition is useful in heat conduction problems, for example, where the diffusion coefficient is the ratio of the thermal conductivity to the volumetric heat capacity, as equation (\ref{eq:type3interface2}) allows one to impose continuity of the heat flux as opposed to continuity of the diffusive flux \cite{hickson_2011}. For this reason, we will refer to $\gamma_{i}$ as the conductivity. In contrast, Type II and Type IV conditions give rise to imperfect contact at the $i$th interface, meaning that the solution is discontinuous across the $i$th interface. Equations (\ref{eq:type2interface1})--(\ref{eq:type2interface2}) specify that the flux is proportional, with proportionality coefficient (contact transfer coefficient) $H_i>0$, to the difference in the solutions $u_i(l_i,t)$ and $u_{i+1}(l_i,t)$ at the $i$th interface, whereas equation (\ref{eq:type4interface1}) specifies that $u_i(l_i,t)$ is proportional to $u_{i+1}(l_i,t)$, with proportionality coefficient (partition coefficient) $\theta_i > 0$. In the case of infinite contact transfer coefficient, $H_i \rightarrow \infty$, Type II conditions reduce to Type I. Type I conditions occur in pure diffusion problems \citep{carr_2016,hickson_2009a} and in modelling the growth of brain tumours \citep{mantzavinos_2014,asvestas_2014},  Type II conditions are used to model roughness/contact resistance between adjacent layers \citep{carr_2016,hickson_2009a}, Type III conditions appear in models of concentration diffusion in porous media \citep{sales_2002} and dissolved contaminant diffusion in aquitards \citep{liu_1998} and Type IV conditions ensure a discontinuity in the solution \change{that is useful in modelling drug release from microcapsules \citep{kaoui_2018,gudnason_2018}.}

Finite volume/difference schemes for the multilayer diffusion problem described above have been presented by \citet{carr_2016}, who implemented a finite volume scheme, and by \citet{hickson_2011}, who implemented a finite difference scheme. Both papers consider general Robin external boundary conditions and Euler time stepping schemes with the key difference lying in the treatment of the interface conditions. For Type I conditions (\ref{eq:type1interface1})--(\ref{eq:type1interface2}), the finite difference equation derived at the interface by \citet{hickson_2011} is equivalent to the finite volume equation derived at the interface by \citet{carr_2016}, provided that the grid spacing is identical in the two layers adjacent to the interface. However, for Type II conditions (\ref{eq:type2interface1})--(\ref{eq:type2interface2}), both \citet{carr_2016} and \citet{hickson_2011} propose two very different strategies. 

The approach taken by \citet{carr_2016} is to introduce two nodes at the interface to explicitly account for the discontinuity in the solution, with each node associated with one of two ``half'' finite volumes located either side of the interface. Under such a configuration, a finite volume \change{boundary} is located precisely at the interface, which allows the expressions for the flux specified by the interface conditions (\ref{eq:type2interface1})--(\ref{eq:type2interface2}) to be directly substituted into the finite volume equations. The remaining first-order spatial derivatives are discretised using a second-order central difference approximation giving rise to a finite volume scheme with a three-point stencil and a tridiagonal matrix structure. 

Conversely, \citet{hickson_2011} define two fictitious nodes, one slightly to the left and one slightly to the right of the interface. The interface conditions are incorporated into the formulation via the finite difference equations defined at the nodes immediately to the left and right of the interface. To approximate the second-order spatial derivatives at these two nodes a combination of forward, backward and central difference approximations are used that involve the two fictitious nodes at the interface. Due to their classification as fictitious, the solution at the two nodes at the interface must be expressed in terms of the solution at the surrounding nodes and this is achieved by solving an appropriate linear system of equations formulated using the interface conditions and suitable Taylor series expansions of the solution about neighbouring nodes. The net result is a finite difference scheme exhibiting an asymmetric four-point stencil, which erodes the tridiagonal structure of the matrix. 

In contrast to \citet{carr_2016}, \citet{hickson_2011} considered Type III conditions (\ref{eq:type3interface1})--(\ref{eq:type3interface2}) utilising a similar strategy to the one outlined above for Type II conditions (\ref{eq:type2interface1})--(\ref{eq:type2interface2}), with the exception being that a node is defined at the interface. However, as was the case for Type II conditions, the resulting matrix appearing in the finite difference discretisation is not tridiagonal. 

Other numerical schemes for multilayer diffusion can be found in papers by \citet{hein_2012}, \citet{mcginty_2015,mcginty_2016} and \citet{gudnason_2018}. \citet{hein_2012}, who considered only Type I conditions, included a finite difference equation for the flux condition  (\ref{eq:type1interface2}) directly, which was derived using second-order forward and backward difference approximations for the left and right-hand sides of equation (\ref{eq:type1interface2}), respectively. \citet{mcginty_2015,mcginty_2016} presented numerical solutions to a problem involving advection-diffusion across a layered medium, applying the strategy proposed by \citet{hickson_2011} for treating the interface conditions. \change{\citet{gudnason_2018} presented a finite element scheme that is suitable for Type IV conditions.} In each of these papers, as well as in the papers by \citet{carr_2016} and \citet{hickson_2011}, stability and convergence of the numerical schemes were not studied. 

This paper presents two main contributions:
\begin{enumerate}[(i)]
\item a new finite volume method for solving (\ref{eq:PDE})--(\ref{eq:right_BC}) capable of treating all four types of interface conditions \change{(\ref{eq:type1interface1})--(\ref{eq:type4interface2})}, and
\item stability and convergence analysis of the proposed finite volume method. 
\end{enumerate}
\noindent
The new finite volume method retains the tridiagonal matrix structure of Carr and Turner's \cite{carr_2016} scheme and is second-order accurate in space. A significant contribution of the work is the derivation of stability conditions for multilayer diffusion, which provide constraints on the time step and grid spacing that ensure stability of the finite volume scheme. \change{As we will see later,} a key finding is that to ensure stability when using the forward Euler scheme in time, it is not enough to simply enforce that the time step $\tau$ satisfies the classical stability constraint in each layer, i.e., $\tau\leq h_{i}^{2}/(2D_{i})$ for all $i = 1,\hdots,m$, where $D_{i}$ and $h_{i}$ are the diffusivity and uniform grid spacing in the $i$th layer, respectively.

The remaining sections of this paper are organised in the following way. In section \ref{sec:discretisation}, the new finite volume method is presented by outlining both the spatial and temporal discretisations employed.  In section \ref{sec:analysis}, the stability and convergence properties of the finite volume schemes are analysed and a complete list of the stability conditions is summarised. In section \ref{sec:numerical_experiments}, the finite volume method is applied to a series of test cases \change{with both the stability conditions and spatial accuracy of the finite volume schemes confirmed numerically}. The paper then concludes in section \ref{sec:conclusion} with a summary of the key findings of the work.

\section{Finite volume method}
\label{sec:discretisation}
We now derive a finite volume scheme for the discretisation of (\ref{eq:PDE})--(\ref{eq:right_BC}) capable of treating all four types of interface conditions (\ref{eq:type1interface1})--(\ref{eq:type4interface2}). To simplify the presentation, we note that each interface condition can be expressed in either of the following two general forms \citep{carr_2016c}:
\begin{enumerate}[(i)]
\item Type GI:
\begin{gather}
\label{eq:general_perfect1}
u_i(l_i,t) = \theta_iu_{i+1}(l_i,t), \quad t>0,\\
\label{eq:general_perfect2}
\gamma_{i} \frac{\partial u_{i}}{\partial x}(l_i,t) = \gamma_{i+1} \frac{\partial u_{i+1}}{\partial x}(l_i,t) , \quad t>0.
\end{gather}
\item Type GII:
\begin{gather}
\label{eq:general_imperfect1}
\gamma_i \frac{\partial u_i}{\partial x}(l_i,t) = H_i(\theta_iu_{i+1}(l_i,t) - u_i(l_i,t)), \quad t>0,\\
\label{eq:general_imperfect2}
\gamma_{i} \frac{\partial u_{i}}{\partial x}(l_i,t) = \gamma_{i+1} \frac{\partial u_{i+1}}{\partial x}(l_i,t) , \quad t>0.
\end{gather}
\end{enumerate}
\change{Under this formulation,} Type I conditions (\ref{eq:type1interface1})--(\ref{eq:type1interface2}), are recovered using Type GI by setting $\gamma_{i} = D_{i}$, $\gamma_{i+1} = D_{i+1}$ and $\theta_i = 1$ and Type II conditions (\ref{eq:type2interface1})--(\ref{eq:type2interface2}), are recovered using Type GII by setting $\gamma_{i} = D_{i}$ and $\gamma_{i+1} = D_{i+1}$. Type III conditions (\ref{eq:type3interface1})--(\ref{eq:type3interface2}) are recovered using Type GI by setting $\theta_i = 1$ while Type IV conditions (\ref{eq:type4interface1})--(\ref{eq:type4interface2}) can also be recovered using Type GI by setting $\gamma_i = D_i$ and $\gamma_{i+1} = D_{i+1}$. Even though in the limit that $H_i \rightarrow \infty$, Type GII reduces to Type GI, \change{we consider both general forms individually} in the finite volume method presented in this paper. As we will see in the next section, for Type GI, only one discrete unknown is required at the interface as there is an explicit relationship between $u_i(l_i,t)$ and $u_{i+1}(l_i,t)$ due to equation (\ref{eq:general_perfect1}), while for Type GII, two discrete unknowns are used to account for the discontinuity in the solution at the interface. 

\subsection{Spatial discretisation}
\label{sec:spatial_discretisation}
Spatial discretisation is carried out using a vertex-centered finite volume method. The mesh is assumed to be uniform in each layer, with nodes located at positions $x = x_{i,j} := l_{i-1}+jh_{i}$ for $j = 0,\hdots,n$ and $i = 1,\hdots,m$, where $h_{i} := (l_{i}-l_{i-1})/n$ is the grid spacing in layer $i$ and $n+1$ is the number of nodes in layer $i$\footnote{The finite volume scheme and analysis presented in this paper also holds if $n$ varies over the layers, however, we will ignore this for ease of notation.}. Configuring the mesh in this way defines two nodes at each interface, since $x_{i,n} = x_{i+1,0} = l_{i}$. The finite volume corresponding to the node at $x = x_{i,j}$, denoted by $V_{i,j}$, is defined as follows: $V_{i,j} := [x_{i,j}^w,x_{i,j}^e]$, where
\begin{align*}
x_{i,j}^w = \begin{cases} l_{i-1} & \text{if $j = 0$}\\
(x_{i,j-1}+x_{i,j})/2 & \text{else}
\end{cases}\qquad x_{i,j}^e = \begin{cases} l_{i} & \text{if $j = n$}\\
(x_{i,j}+x_{i,j+1})/2 & \text{else},
\end{cases}
\end{align*}
which defines ``half'' finite volumes either side of the interfaces and at the external boundaries.

Let $u_{i,j}(t)$ denote the discrete numerical approximation to $u_{i}(x_{i,j},t)$ and define $\Delta x_{i,j} := x_{i,j}^e - x_{i,j}^w$. Integrating equation (\ref{eq:PDE}) over an arbitrary finite volume $V_{i,j}$ and approximating $u_{i}(x,t)$ by $u_{i,j}(t)$ for all $x\in V_{i,j}$ yields: 
\begin{align}
\label{eq:general_fvm}
\Delta x_{i,j}\frac{du_{i,j}}{dt} = D_{i}\frac{\partial u_i}{\partial x} (x_{i,j}^e,t) - D_{i}\frac{\partial u_i}{\partial x} (x_{i,j}^w,t),
\end{align}
which is valid for all nodes in the mesh, that is, for all $i = 1,\hdots,m$ and $j = 0,\hdots,n$. The derivation of the finite volume equations using (\ref{eq:general_fvm}) depends on whether the finite volume $V_{i,j}$ corresponds to an interior node, boundary node or interface node. \change{The first two of these cases are quite standard, however, for completeness, we consider all three cases in the sections that follow}.
\subsubsection{Interior nodes}
Consider the equation (\ref{eq:general_fvm}) for nodes located in the interior of the layers, where $x_{i,j}\in(l_{i-1},l_{i})$. Using a second-order central difference approximation to the spatial derivatives, e.g.\begin{gather*}
\frac{\partial u_{i}}{\partial x}(x_{i,j}^{e},t) \approx \frac{u_{i,j+1}-u_{i,j}}{h_{i}},
\end{gather*}
noting that $\Delta x_{i,j} = h_{i}$ for interior nodes and rearranging, yields the \change{standard} finite volume equation:
\begin{align}
\label{eq:interior_ODEs}
\frac{du_{i,j}}{dt} = \frac{D_i}{h_i^2}(u_{i,j+1} - 2u_{i,j}+u_{i,j-1}).
\end{align}
The semi-discretised equation (\ref{eq:interior_ODEs}) applies for all interior nodes with the exception of a couple of special cases involving the nodes that are adjacent to the interfaces or external boundaries. These special cases are explained further in the sections that follow.
\subsubsection{Boundary nodes}
For the left external boundary node, located at $x = x_{1,0} = l_{0}$, equation (\ref{eq:general_fvm}) takes the form:
\begin{gather}
\label{eq:finite_volume2}
\frac{h_{1}}{2}\frac{du_{1,0}}{dt} = D_{1}\frac{\partial u_1}{\partial x}(x_{1,0}^{e},t) - D_{1}\frac{\partial u_1}{\partial x}(l_{0},t),
\end{gather}
since $\Delta x_{1,0} = h_{1}/2$ and $x_{1,0}^{w} = l_{0}$. To approximate the spatial derivative at $x = l_{0}$, the left boundary condition (\ref{eq:left_BC}) is rearranged and $u_{1}(l_{0},t)$ replaced by $u_{1,0}$ to give:
\begin{gather}
\label{eq:left_BC_flux}
\frac{\partial u_1}{\partial x}(l_0,t) \approx \frac{a_L u_{1,0} - c_L}{b_L}.
\end{gather}
Substituting (\ref{eq:left_BC_flux}) into (\ref{eq:finite_volume2}) and using a second-order central difference approximation to the remaining spatial derivative at $x = x_{1,0}^{e}$ appearing in (\ref{eq:finite_volume2}) produces the following finite volume equation:
\begin{gather}
\label{eq:left_BC_ODEs}
\frac{du_{1,0}}{dt} = -\frac{2D_1}{h_1}\left[\frac{1}{h_1}+\frac{a_L}{b_L}\right]u_{1,0} + \frac{2D_1}{h_1^2}u_{1,1}+\frac{2D_1c_L}{h_1b_L}.
\end{gather}
Note that equations (\ref{eq:left_BC_flux}) and (\ref{eq:left_BC_ODEs}) are valid only if $b_{L}\neq 0$. If $b_{L} = 0$\footnote{In this case, we must have that $a_{L}\neq 0$, otherwise the boundary condition (\ref{eq:left_BC}) vanishes.}, a finite volume equation is not included for $u_{1,0}$ as the discrete unknown can be computed directly from the boundary condition (\ref{eq:left_BC}) as follows: $u_{1,0} = c_{L}/a_{L}$. In this case, as $u_{1,0}$ is no longer treated as an unknown in the formulation, the finite volume equation for the node immediately to the right of the left boundary needs to be modified to eliminate $u_{1,0}$:
\begin{align}
\label{eq:left_BC_dirichlet_ODEs}
\frac{du_{1,1}}{dt} = \frac{D_1}{h_1^2}\left(u_{1,2} - 2u_{1,1}+\frac{c_L}{a_L}\right).
\end{align}
The right external boundary node, located at $x = x_{m,n} = l_{m}$, is treated in a similar manner to that described above for the left external boundary node. If $b_{R}\neq 0$, the following finite volume equation is derived:
\begin{align}
\label{eq:right_BC_ODEs}
\frac{du_{m,n}}{dt} =  \frac{2D_m}{h_m^2}u_{m,n-1} -\frac{2D_m}{h_m}\left[\frac{1}{h_m}+\frac{a_R}{b_R}\right]u_{m,n}+\frac{2D_mc_R}{h_mb_R}.
\end{align}
On the other hand, if $b_{R} = 0$ the discrete unknown is calculated directly from the boundary condition (\ref{eq:right_BC}) as $u_{m,n} = c_{R}/a_R$ and the finite volume equation for the node immediately to the left of the right boundary is modified to give:
\begin{align}
\label{eq:right_BC_dirichlet_ODEs}
\frac{du_{m,n-1}}{dt} = 
\frac{D_m}{h_m^2}\left(\frac{c_R}{a_R} - 2u_{m,n-1}+u_{m,n-2}\right).
\end{align}
\subsubsection{Interface nodes}
Consider the form of equation (\ref{eq:general_fvm}) for the two nodes positioned at $x_{i,n}$ and $x_{i+1,0}$, both of which are located at the $i$th interface ($x_{i,n}=x_{i+1,0}=l_{i}$). Since $\Delta x_{i,n} = h_{i}/2$ and $\Delta x_{i+1,0} = h_{i+1}/2$, we have:
\begin{gather}
\label{eq:interface1_ODEs}
\frac{h_{i}}{2}\frac{du_{i,n}}{dt} = D_{i} \frac{\partial u_i}{\partial x}(l_{i},t) - D_{i}\frac{\partial u_i}{\partial x}(x_{i,n}^{w},t),\\
\label{eq:interface2_ODEs}
\frac{h_{i+1}}{2}\frac{du_{i+1,0}}{dt} = D_{i+1}\frac{\partial u_{i+1}}{\partial x}(x_{i+1,0}^{e},t) - D_{i+1}\frac{\partial u_{i+1,0}}{\partial x}(l_{i},t).
\end{gather}
Multiplying equations (\ref{eq:interface1_ODEs}) and (\ref{eq:interface2_ODEs}) by $\gamma_i/D_i$ and $\gamma_{i+1}/D_{i+1}$, respectively, yields:
\begin{gather}
\label{eq:interface1_ODEs2}
\frac{\gamma_ih_i}{2D_i}\frac{du_{i,n}}{dt} = \gamma_i\frac{\partial u_i}{\partial x}(l_{i},t) - \gamma_i\frac{\partial u_i}{\partial x}(x_{i,n}^{w},t),\\
\label{eq:interface2_ODEs2}
\frac{\gamma_{i+1}h_{i+1}}{2D_{i+1}}\frac{du_{i+1,0}}{dt} =\gamma_{i+1}\frac{\partial u_{i+1}}{\partial x}(x_{i+1,0}^{e},t) - \gamma_{i+1}\frac{\partial u_{i+1,0}}{\partial x}(l_{i},t).
\end{gather}
The Type GI and Type GII interface conditions are now considered separately:\\

{\noindent\textit{Type GI conditions}}\\
Due to the interface condition (\ref{eq:general_perfect1}), \change{either  $u_{i,n}$ or $u_{i+1,0}$} can be eliminated from the finite volume formulation and equations (\ref{eq:interface1_ODEs2}) and (\ref{eq:interface2_ODEs2}) combined into a single finite volume equation at the interface. Adding equations (\ref{eq:interface1_ODEs2}) and (\ref{eq:interface2_ODEs2}) yields:
\begin{align}
\label{eq:interface_ODEs}
\frac{1}{2}\frac{d}{dt}\left(\frac{\gamma_ih_i}{D_i}u_{i,n}+\frac{\gamma_{i+1}h_{i+1}}{D_{i+1}}u_{i+1,0}\right) = \gamma_{i+1}\frac{\partial u_{i+1}}{\partial x}(x_{i+1,0}^{e},t) - \gamma_i \frac{\partial u_i}{\partial x}(x_{i,n}^{w},t),
\end{align}
by noting that the fluxes at $x = l_{i}$ cancel due to the interface condition (\ref{eq:general_perfect2}). Substituting $u_{i+1,0} = u_{i,n}/\theta_{i}$ and simplifying produces:
\begin{align}
\label{eq:interface_ODEs2}
\frac{du_{i,n}}{dt} = \frac{2D_iD_{i+1}\theta_i}{\gamma_ih_i\theta_iD_{i+1}+\gamma_{i+1}h_{i+1}D_i}\left[\gamma_{i+1}\frac{\partial u_{i+1}}{\partial x}(x_{i+1,0}^{e},t) - \gamma_i \frac{\partial u_i}{\partial x}(x_{i,n}^{w},t)\right].
\end{align}
Using second-order central difference approximations to the two remaining spatial derivatives in equation (\ref{eq:interface_ODEs2}) produces \change{the final semi-discretised finite volume equation}:
\begin{align}
\label{eq:interface_ODEs3}
\frac{du_{i,n}}{dt} = \frac{2D_iD_{i+1}\theta_i}{\gamma_ih_i\theta_iD_{i+1}+\gamma_{i+1}h_{i+1}D_i}\left[\frac{\gamma_i}{h_i}u_{i,n-1} - \left(\frac{\gamma_i}{h_i}+\frac{\gamma_{i+1}}{\theta_ih_{i+1}}\right)u_{i,n}+\frac{\gamma_{i+1}}{h_{i+1}}u_{i+1,1}\right].
\end{align}
As $u_{i+1,0}$ is no longer considered as a discrete unknown in the formulation, the finite volume equation for the interior node \change{immediately} to the right of the $i$th interface is modified to eliminate $u_{i+1,0}$ by making the substitution $u_{i+1,0} = u_{i,n}/\theta_{i}$:
\begin{align}
\label{eq:right_of_interface_ODEs}
\frac{du_{i+1,1}}{dt} = \frac{D_{i+1}}{h_{i+1}^2}\left(\frac{u_{i,n}}{\theta_i}- 2u_{i+1,1}+u_{i+1,2}\right).
\end{align}
\change{The finite volume equations (\ref{eq:interface_ODEs3}) and (\ref{eq:right_of_interface_ODEs}) are used in our finite volume method provided $\theta_{i}\geq 1$. If $\theta_{i}<1$ diagonal dominance of (\ref{eq:right_of_interface_ODEs}) is no longer maintained. This can be resolved, however, by eliminating $u_{i,n}$ instead of $u_{i+1,0}$ in the formulation. In this case, the finite volume equation (\ref{eq:interface_ODEs3}) is replaced with:
\begin{align}
\label{eq:reformulation}
\frac{du_{i+1,0}}{dt} = \frac{2D_iD_{i+1}}{\gamma_ih_i\theta_iD_{i+1}+\gamma_{i+1}h_{i+1}D_i}\left[\frac{\gamma_i}{h_i}u_{i,n-1} - \left(\frac{\theta_i\gamma_i}{h_i}+\frac{\gamma_{i+1}}{h_{i+1}}\right)u_{i+1,0}+\frac{\gamma_{i+1}}{h_{i+1}}u_{i+1,1}\right],
\end{align}
with the finite volume equation immediately to the left of the interface now modified to eliminate $u_{i,n}$ via the substitution $u_{i,n} = \theta_iu_{i+1,0}$:
\begin{align}
\label{eq:left_of_interface_ODEs}
\frac{du_{i,n-1}}{dt} = \frac{D_{i}}{h_{i}^2}\left(u_{i,n-2}- 2u_{i,n-1}+\theta_iu_{i+1,0}\right).
\end{align}
Note that (\ref{eq:left_of_interface_ODEs}) is diagonally dominant for $\theta_{i} < 1$. In summary, the finite volume equations (\ref{eq:interface_ODEs3}) and (\ref{eq:right_of_interface_ODEs}) are used if $\theta_i \geq 1$ whereas the finite volume equations (\ref{eq:reformulation}) and (\ref{eq:left_of_interface_ODEs}) are used if $\theta_i < 1$.}\\

\noindent{\textit{Type GII  conditions}}\\
In this case, both $u_{i,n}$ and $u_{i+1,0}$ are retained as discrete unknowns in the formulation as an explicit relationship does not exist between them. We make the following approximation:
\begin{align}
\label{eq:interface_flux_approximation}
\gamma_{i}\frac{\partial u_{i}}{\partial x}(l_{i},t) = \gamma_{i+1}\frac{\partial u_{i+1}}{\partial x}(l_{i},t) \approx H_{i}\left( \theta_iu_{i+1,0} - u_{i,n}\right),
\end{align}
making use of equations (\ref{eq:general_imperfect1}) and (\ref{eq:general_imperfect2}) and the approximations $u_{i}(x_{i,n},t)\approx u_{i,n}(t)$ and $u_{i}(x_{i+1,0},t)\approx u_{i+1,0}(t)$. Substituting (\ref{eq:interface_flux_approximation}) into equations (\ref{eq:interface1_ODEs2}) and (\ref{eq:interface2_ODEs2}), yields:
\begin{gather}
\label{eq:interface1_ODEs3}
\frac{\gamma_ih_i}{2D_i}\frac{du_{i,n}}{dt} = H_i(\theta_iu_{i+1,0} - u_{i,n}) - \gamma_i \frac{\partial u_i}{\partial x}(x_{i,n}^{w},t),\\
\label{eq:interface2_ODEs3}
\frac{\gamma_{i+1}h_{i+1}}{2D_{i+1}}\frac{du_{i+1,0}}{dt} = \gamma_{i+1}\frac{\partial u_{i+1}}{\partial x}(x_{i+1,0}^{e},t) - H_i(\theta_iu_{i+1,0} - u_{i,n}).
\end{gather}
Applying a second-order central difference approximation to the remaining partial derivatives in (\ref{eq:interface1_ODEs3}) and (\ref{eq:interface2_ODEs3}), we obtain the following pair of finite volume equations:
\begin{gather}
\label{eq:interface1_ODEs4}
\frac{du_{i,n}}{dt} = \frac{2D_i}{\gamma_ih_i}\left[\frac{\gamma_i}{h_i}u_{i,n-1} - \left(H_i+\frac{\gamma_i}{h_i}\right)u_{i,n}+\theta_iH_iu_{i+1,0}\right],\\
\label{eq:interface2_ODEs4}
\frac{du_{i+1,0}}{dt} = \frac{2D_{i+1}}{\gamma_{i+1}h_{i+1}}\left[H_i u_{i,n} - \left(\theta_iH_i+\frac{\gamma_{i+1}}{h_{i+1}}\right)u_{i+1,0}+\frac{\gamma_{i+1}}{h_{i+1}}u_{i+1,1}\right].
\end{gather}
\subsubsection{Summary of spatial discretisation}
Assembling the finite volume equations produces an initial value problem expressible in the following matrix form:
\begin{align}
\label{eq:system_ODEs}
\frac{d\mathbf{u}}{dt} = \mathbf{Au}+\mathbf{b},\quad\mathbf{u}(0) = \mathbf{u}^{(0)},
\end{align}
where $\mathbf{u} = [u_{1,0},u_{1,1},\hdots,u_{m,n-1},u_{m,n}]^T\in\mathbb{R}^{N}$ excluding $u_{1,0}$ if $b_L = 0$, \change{excluding $u_{m,n}$ if $b_R = 0$ and excluding either $u_{i+1,0}$ (if $\theta_{i}\geq 1$) or $u_{i,n}$ (if $\theta_{i}<1$)} for all interfaces $x = l_{i}$ at which Type GI interface conditions are applied. The initial solution vector $\mathbf{u}^{(0)}$ is calculated by evaluating the initial conditions (\ref{eq:initial_conditions}) at the nodes. The entries of $\mathbf{A}\in\mathbb{R}^{N\times N}$ and $\mathbf{b}\in\mathbb{R}^{N}$ are identified from the individual finite volume equations \change{(\ref{eq:interior_ODEs}), (\ref{eq:left_BC_ODEs})--(\ref{eq:right_BC_dirichlet_ODEs}), (\ref{eq:interface_ODEs3})--(\ref{eq:left_of_interface_ODEs}), (\ref{eq:interface1_ODEs4}) and (\ref{eq:interface2_ODEs4})}, with the number of unknowns $N = m(n+1)-q - r$, where $q$ is the number of interfaces at which Type GI interface conditions are applied ($q = 0, 1, \hdots,$ or $m-1$) and $r$ is the number of external boundary conditions of Dirichlet type\footnote{Note that $b_{L} = 0$ implies $a_{L}\neq 0$ and $b_{R} = 0$ implies $a_{R}\neq 0$ otherwise the boundary conditions (\ref{eq:left_BC}) and (\ref{eq:right_BC}) vanish.}:
\begin{gather*}
r = \begin{cases} 0 & \text{if $b_{L}\neq 0$ and $b_{R}\neq 0$,}\\
1 & \text{if $b_{L} = 0$ and $b_{R}\neq 0$ or $b_{L} \neq 0$ and $b_{R} = 0$,}\\
2 & \text{if $b_{L} = 0$ and $b_{R} = 0$.}
\end{cases}
\end{gather*}
An important observation is that all of the derived finite volume equations \change{(\ref{eq:interior_ODEs}), (\ref{eq:left_BC_ODEs})--(\ref{eq:right_BC_dirichlet_ODEs}), (\ref{eq:interface_ODEs3})--(\ref{eq:left_of_interface_ODEs}), (\ref{eq:interface1_ODEs4}) and (\ref{eq:interface2_ODEs4})} involve a three-point stencil, so $\mathbf{A}$ is tridiagonal.
\subsection{Temporal Discretisation}
\label{sec:temporal}
Temporal discretisation of the system of ODEs (\ref{eq:system_ODEs}) is carried out using one of three classical time stepping methods:
\begin{itemize}
\item Forward Euler: 
\begin{align}
\label{eq:forward_euler}
\mathbf{u}^{(k+1)} = \mathbf{A}_{F}\mathbf{u}^{(k)}+\tau\mathbf{b}, \quad \text{where $\mathbf{A}_{F} = \mathbf{I}+\tau\mathbf{A}$}.
\end{align} 
\item Backward Euler:
\begin{align}
\label{eq:backward_euler}
\mathbf{u}^{(k+1)} = \mathbf{A}_{B}(\mathbf{u}^{(k)}+\tau\mathbf{b}),\quad \text{where $\mathbf{A}_B = (\mathbf{I}-\tau\mathbf{A})^{-1}$}.
\end{align}
\item Crank-Nicolson: 
\begin{align}
\label{eq:crank_nicolson}
\mathbf{u}^{(k+1)} = \mathbf{A}_C\mathbf{u}^{(k)} + \tau\left(\mathbf{I}-\tfrac{\tau}{2}\mathbf{A}\right)^{-1}\mathbf{b}, \quad \text{where $\mathbf{A}_C = (\mathbf{I}-\tfrac{\tau}{2}\mathbf{A})^{-1}(\mathbf{I}+\tfrac{\tau}{2}\mathbf{A})$}.
\end{align}
\end{itemize}
In each of the three time discretisation schemes, $\tau$ is the (fixed) time step, $\mathbf{u}^{(k+1)}$ and $\mathbf{u}^{(k)}$ denote the numerical approximations to $\mathbf{u}(t)$ at $t = (k+1)\tau =: t_{k+1}$ and $t = k\tau =: t_{k}$, respectively, and $\mathbf{I}$ is the $N\times N$ identity matrix. \change{We remark here that the} inverses $(\mathbf{I}-\tau\mathbf{A})^{-1}$ and $(\mathbf{I}-\tfrac{\tau}{2}\mathbf{A})^{-1}$ exist as will be demonstrated in section \ref{sec:stability}. As $\mathbf{A}$ is tridiagonal, the linear system solves used in the backward Euler (\ref{eq:backward_euler}) and Crank-Nicolson (\ref{eq:crank_nicolson}) schemes can be carried out efficiently in $O(N)$ operations using the tridiagonal matrix algorithm.
\section{Theoretical analysis}
\label{sec:analysis}
\subsection{Stability}
\label{sec:stability}
A necessary and sufficient condition for stability of the forward Euler (\ref{eq:forward_euler}), backward Euler (\ref{eq:backward_euler}) and Crank-Nicolson (\ref{eq:crank_nicolson}) schemes is provided by the spectral radius of the iteration matrices \cite{sucec_1987}. Namely, the schemes are stable if and only if {
\begin{align}
\label{eq:spectral_radius}
\rho(\mathbf{A}_{i}) \leq 1,
\end{align}
for $i\in\{F,B,C\}$. Note that equation (\ref{eq:spectral_radius}) is the stability condition for asymptotic stability, that is, as the number of time steps $k\rightarrow\infty$. For finite $k$ (or equivalently a finite time interval $0\leq t \leq T$), (\ref{eq:spectral_radius}) may be weakened to $\rho(\mathbf{A}_{i})\leq 1 + O(\tau)$ \cite{lax_1956}, however, we take (\ref{eq:spectral_radius}) as our stability condition as it also ensures} that in the limit $k \rightarrow \infty$, the solution $\mathbf{u}^{(k)}$ approaches the steady state solution of the ODE system (\ref{eq:system_ODEs}):
\begin{align}
\label{eq:steady_state}
\mathbf{u^{(\infty)}} = {\lim_{k\rightarrow\infty} \mathbf{u}^{(k)} = } -\mathbf{A}^{-1}\mathbf{b}.
\end{align}
{For example, for the forward Euler method (\ref{eq:forward_euler}), we \change{have:}
\begin{gather}
\mathbf{u}^{(1)} = \mathbf{A}_{F}\mathbf{u}^{(0)} + \tau\mathbf{b},\quad \mathbf{u}^{(2)} = \mathbf{A}_{F}^{2}\mathbf{u}^{(0)} + \tau(\mathbf{A}_{F}\mathbf{b} + \mathbf{b}),
\end{gather}
and in general:}
\begin{align}
\label{eq:forward_euler_sum}
\mathbf{u}^{(k)} = \mathbf{A}_F^k\mathbf{u^{(0)}} + \tau \sum_{i = 0}^{k-1} \mathbf{A}_F^i\mathbf{b}.
\end{align}
Multiplying equation (\ref{eq:forward_euler_sum}) on the left by $\mathbf{A}_F$ and then subtracting  equation (\ref{eq:forward_euler_sum}) from this result yields:
\begin{align}
\label{eq:forward_steady}
(\mathbf{A}_F - \mathbf{I})\mathbf{u}^{(k)} = (\mathbf{A}_F-\mathbf{I})\mathbf{A}_F^k\mathbf{u^{(0)}}  + \tau(\mathbf{A}_F^k-\mathbf{I})\mathbf{b}.
\end{align}
If $\rho(\mathbf{A}_F) \leq 1$, then $\mathbf{A}_{F}^{k}\rightarrow\mathbf{0}$ as $k\rightarrow\infty$. Hence, taking the limit of (\ref{eq:forward_steady}) as $k\rightarrow\infty$, using the form of $\mathbf{A}_{F}$ (\ref{eq:forward_euler}) and solving for $\mathbf{u}^{(\infty)}$ yields the stated result (\ref{eq:steady_state}).

It follows from ({\ref{eq:spectral_radius}) that an expression for the largest (in magnitude) eigenvalue of the matrices $\mathbf{A}_F,\mathbf{A}_B$ and $\mathbf{A}_C$ can be used to determine stability conditions for each temporal discretisation method (\ref{eq:forward_euler})--(\ref{eq:crank_nicolson}). By considering the eigenvalues of the coefficient matrix $\mathbf{A}$ appearing in equation (\ref{eq:system_ODEs}), an expression for the eigenvalues of the matrices $\mathbf{A}_F,\mathbf{A}_B$ and $\mathbf{A}_C$ can be determined. If $\lambda$ is an eigenvalue of $\mathbf{A}$, then it is simple to show that:
\begin{itemize}
\item $1+\tau\lambda$ is an eigenvalue of $\mathbf{A}_F$.
\item $1/(1-\tau\lambda)$ is an eigenvalue of $\mathbf{A}_B$.
\item $(1+\tfrac{\tau}{2}\lambda)/(1-\tfrac{\tau}{2}\lambda)$ is an eigenvalue of $\mathbf{A}_C$. 
\end{itemize}
The conditions (\ref{eq:spectral_radius}) therefore give rise to the following well-known stability regions for the three time discretisation schemes:
\begin{itemize}
\item Forward Euler:\\
\begin{align}
\label{eq:AF_stability}
\rho(\mathbf{A}_F) \leq  1 \Leftrightarrow \max_{\lambda \in \sigma(\mathbf{A})} |1+\tau\lambda| \leq 1 \Leftrightarrow |\tau\lambda+1| \leq 1, \quad \forall \lambda \in \sigma(\mathbf{A}),
\end{align}
where $\sigma(\mathbf{A})$ refers to the spectrum of $\mathbf{A}$. Hence, for stability of the forward Euler scheme (\ref{eq:forward_euler}), we require that $\tau\lambda \in R_F$ for all $\lambda \in \sigma(\mathbf{A})$, where $R_F$ is the closed disc of radius \change{one} centred at $(-1,0)$ in the complex plane: $R_F = \{z\in \mathbb{C}:|z+1|\leq 1\}$ (see Figure \ref{fig:stability_regions}a).
\item Backward Euler:\\
\begin{align}
\label{eq:AB_stability} 
\rho(\mathbf{A}_B) \leq  1 \Leftrightarrow \max_{\lambda \in \sigma(\mathbf{A})}\frac{1}{ |1-\tau\lambda|} \leq 1 \Leftrightarrow |\tau\lambda-1| \geq 1, \quad \forall \lambda \in \sigma(\mathbf{A}).
\end{align}
Hence, for stability of the backward Euler scheme (\ref{eq:backward_euler}) we require that $\tau\lambda \in R_B$ for all $\lambda \in \sigma(\mathbf{A})$, where $R_B$ is the complement of the open disc of radius \change{one} centred at $(1,0)$ in the complex plane: $R_B = \{z\in \mathbb{C}:|z-1| \geq 1\}$ (see Figure \ref{fig:stability_regions}b).
\item Crank-Nicolson:\\
\begin{align}
\label{eq:AC_stability}
\rho(\mathbf{A}_C) \leq  1 \Leftrightarrow \max_{\lambda \in \sigma(\mathbf{A})}\left|\frac{1+\frac{\tau}{2}\lambda}{1-\frac{\tau}{2}\lambda}\right| \leq 1 \Leftrightarrow \Re(\lambda) \leq 0, \quad \forall \lambda \in \sigma(\mathbf{A}).
\end{align}
Hence, for stability of the Crank-Nicolson scheme (\ref{eq:crank_nicolson}), we require that $\tau\lambda \in R_C$ for all $\lambda \in \sigma(\mathbf{A})$, where $R_C$ is the closed-left half plane: $R_C = \{z\in \mathbb{C}:\Re(z)\leq 0\}$ (see Figure \ref{fig:stability_regions}c).
\end{itemize}

\begin{figure}[htbp]
\centering
\subfloat[Forward Euler]{\includegraphics[width=0.3\textwidth]{./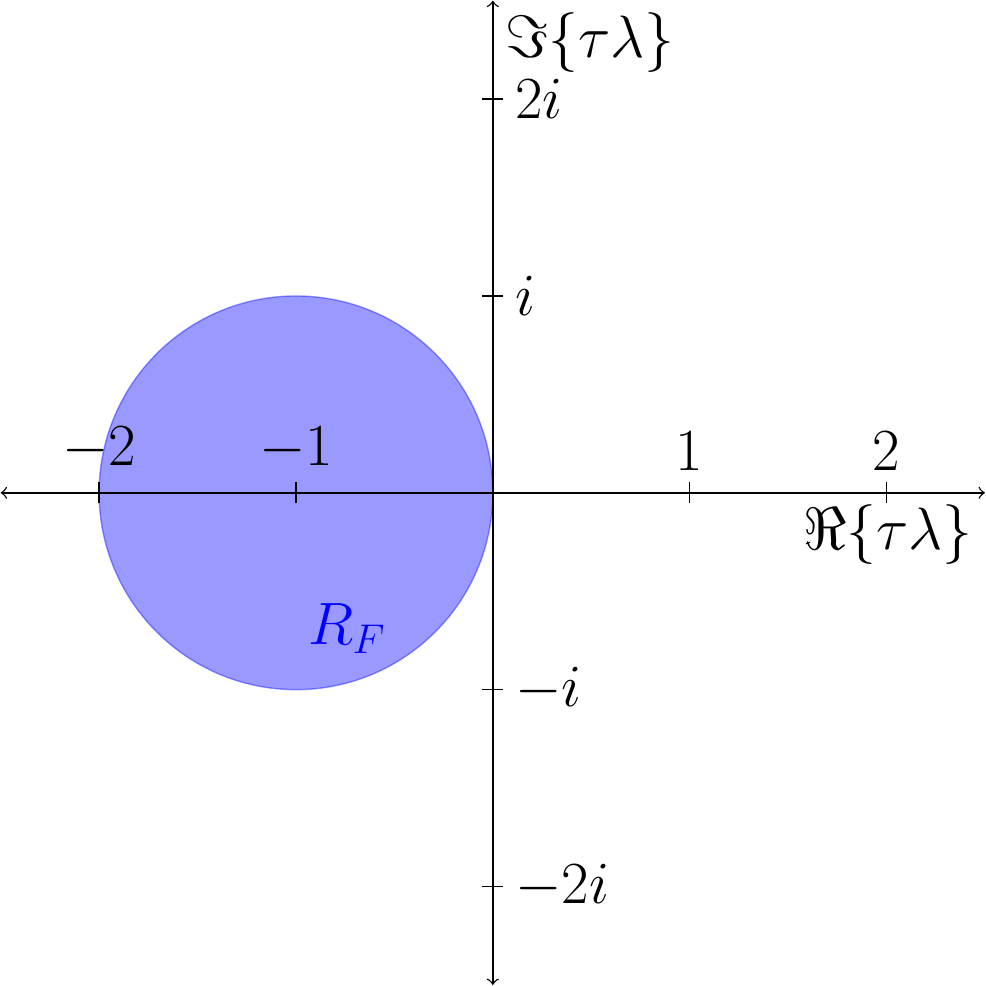}}\hspace{0.5cm}
\subfloat[Backward Euler]{\includegraphics[width=0.3\textwidth]{./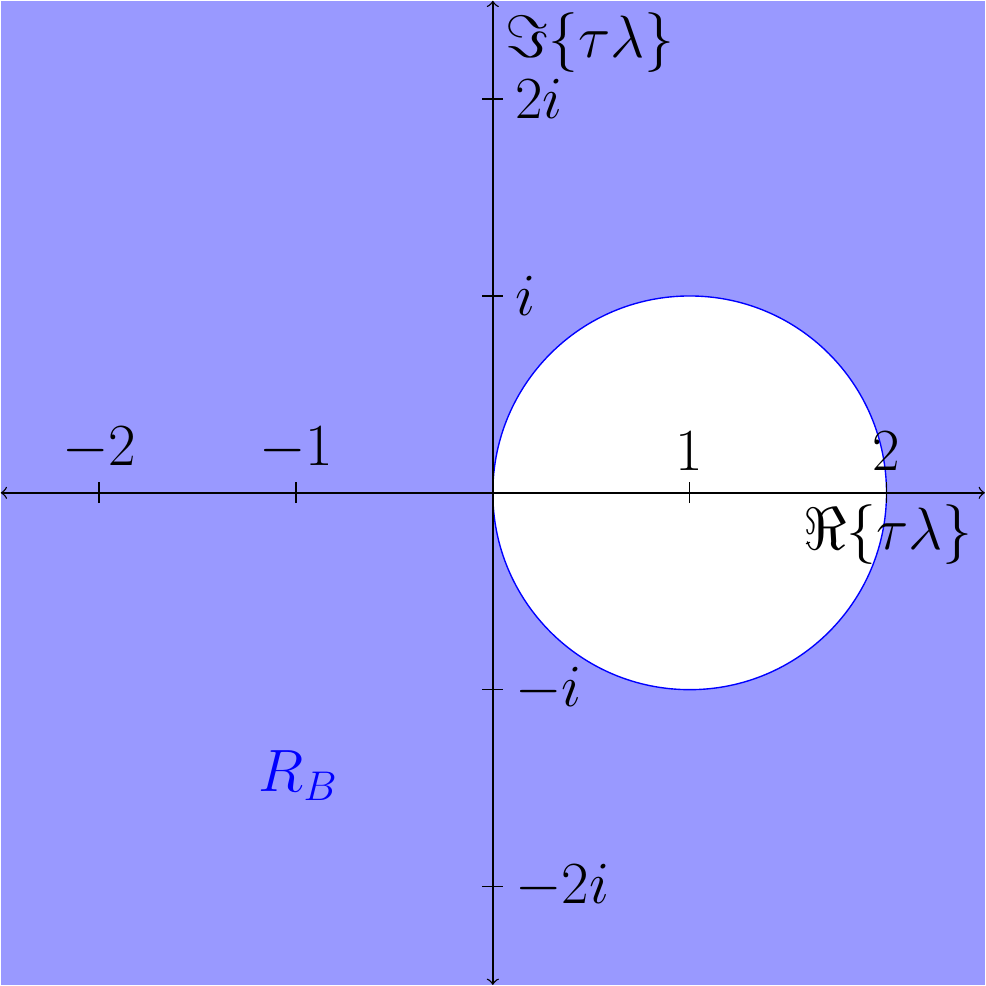}}\hspace{0.5cm}
\subfloat[Crank Nicolson]{\includegraphics[width=0.3\textwidth]{./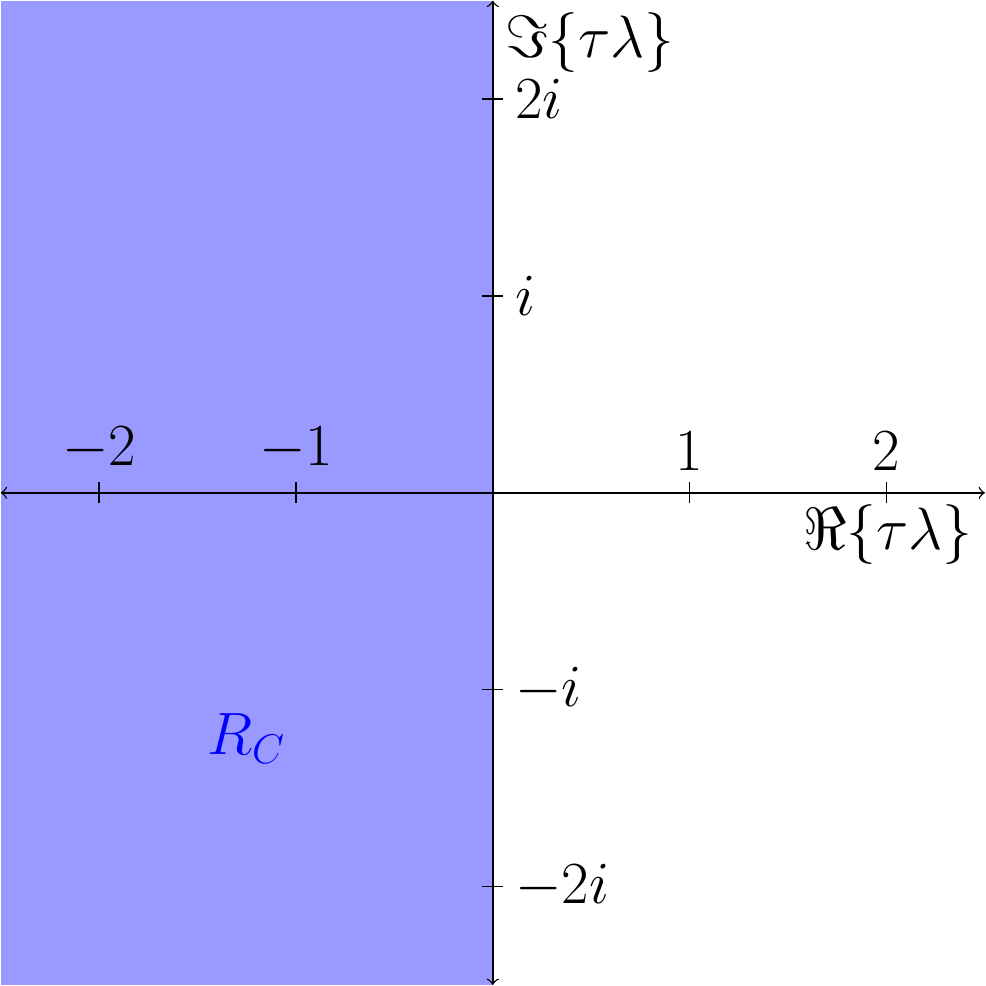}}
\caption{Stability regions for the three time discretisation schemes.}
\label{fig:stability_regions}
\end{figure}

The statements above provide constraints on the spectrum of the matrix $\tau\mathbf{A}$ that ensure stability of the forward Euler (\ref{eq:forward_euler}), backward Euler (\ref{eq:backward_euler}) and Crank-Nicolson (\ref{eq:crank_nicolson}) schemes. \change{We now prove the following result concerning the eigenvalues of $\mathbf{A}$.}

\begin{theorem}
All the eigenvalues of $\mathbf{A}$ are real and negative.
\label{thm:eigenvalues}
\end{theorem}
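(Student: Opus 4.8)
The plan is to exploit the sign structure of the tridiagonal matrix $\mathbf{A}$ that can be read directly off the finite volume equations, splitting the claim into ``real'' and ``negative''. First I would record three structural facts. Inspecting (\ref{eq:interior_ODEs}), (\ref{eq:left_BC_ODEs}), (\ref{eq:right_BC_ODEs}), (\ref{eq:interface_ODEs3}), (\ref{eq:interface1_ODEs4}), (\ref{eq:interface2_ODEs4}) together with the Dirichlet and $\theta_i<1$ variants (\ref{eq:left_BC_dirichlet_ODEs}), (\ref{eq:reformulation}), (\ref{eq:left_of_interface_ODEs}), one checks that, because every constant $D_i,\gamma_i,h_i,H_i,\theta_i,a_L/b_L,a_R/b_R$ is positive: (a) each diagonal entry of $\mathbf{A}$ is strictly negative, (b) each off-diagonal entry is strictly positive, and (c) for every adjacent pair the product $A_{k,k+1}A_{k+1,k}>0$. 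Fact (c) is the engine for reality: a tridiagonal matrix whose opposite off-diagonal entries have positive products is similar, via the positive diagonal matrix $\mathbf{S}$ with $S_{k+1}/S_k=\sqrt{A_{k+1,k}/A_{k,k+1}}$, to the symmetric tridiagonal matrix having the same diagonal and off-diagonal entries $\sqrt{A_{k,k+1}A_{k+1,k}}$. Since similarity preserves the spectrum, all eigenvalues of $\mathbf{A}$ are therefore real.

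For negativity I would turn to the Gershgorin circle theorem, but applied after a second, physically motivated diagonal rescaling rather than to $\mathbf{A}$ itself. Each diagonal disc is centred at $A_{kk}<0$ with radius the absolute off-diagonal row sum, so a disc lies in the closed left half-plane exactly when that row is weakly diagonally dominant. Interior rows (\ref{eq:interior_ODEs}) are dominant with equality, and the non-Neumann boundary rows are \emph{strictly} dominant since $a_L/b_L$ (or $a_R/b_R$, or the removed-node analogue) is positive.

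The main obstacle is the interface rows when $\theta_i\neq 1$: comparing coefficients in (\ref{eq:interface1_ODEs4})--(\ref{eq:interface2_ODEs4}) and in (\ref{eq:interface_ODEs3}), (\ref{eq:right_of_interface_ODEs}) shows that exactly one of the two rows meeting an interface loses diagonal dominance in $\mathbf{A}$. To repair this I would apply Gershgorin to $\mathbf{D}^{-1}\mathbf{A}\mathbf{D}$, where $\mathbf{D}$ is positive diagonal, constant within each layer, and jumps across each interface by the partition factor, so that $D_{\text{layer }i+1}/D_{\text{layer }i}=1/\theta_i$. A short computation shows that this single rescaling cancels the stray $\theta_i$ in every offending off-diagonal and restores dominance with equality in both interface rows, while leaving the interior and boundary rows untouched (their cross-layer ratios are all $1$). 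Since $\mathbf{D}^{-1}\mathbf{A}\mathbf{D}$ has the same spectrum as $\mathbf{A}$, every eigenvalue now satisfies $\Re(\lambda)\le 0$; combined with reality this gives $\lambda\le 0$.

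Finally, to exclude $\lambda=0$ I would invoke irreducibility: $\mathbf{A}$ is tridiagonal with nonzero off-diagonals, hence irreducible, and by the previous paragraph it possesses at least one strictly dominant row coming from the boundary (the excluded Neumann--Neumann case is precisely what guarantees this). An irreducibly diagonally dominant matrix is nonsingular, so $0\notin\sigma(\mathbf{A})$ and every eigenvalue is strictly negative. I expect the bookkeeping of the rescaling — verifying that one diagonal $\mathbf{D}$ simultaneously cures every Type GI and Type GII interface in both $\theta_i$ regimes while preserving boundary dominance — to be the only delicate point; the remainder is routine sign-checking. As an independent cross-check I would note that the same layerwise scaling makes $\mathbf{P}\mathbf{A}$ symmetric, with quadratic form $-\sum_{\text{edges}}w_e(\Delta v)^2$ minus a strictly positive boundary contribution, which is manifestly negative definite and reproves both conclusions at once.
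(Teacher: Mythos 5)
Your proof of reality coincides with the paper's: both use positivity of the products $a_{p,p+1}a_{p+1,p}$ to make $\mathbf{A}$ diagonally similar to a symmetric tridiagonal matrix. For negativity, however, you take a genuinely different route. The paper applies Sylvester's criterion to the symmetrised version of $-\mathbf{A}$, generates the principal minors through the three-term recurrence (\ref{eq:recurrence}), and solves those recurrences in closed form to show every minor is positive --- but it carries this out explicitly only for the two-layer problem with Dirichlet conditions on both boundaries, delegating all other cases to ``similar analysis.'' You instead establish weak diagonal dominance of the rescaled matrix $\mathbf{D}^{-1}\mathbf{A}\mathbf{D}$, with $\mathbf{D}$ constant in each layer and jumping by $1/\theta_i$ across the $i$th interface, apply Gershgorin to conclude $\lambda\leq 0$, and exclude $\lambda=0$ via Taussky's theorem on irreducibly diagonally dominant matrices, using the excluded Neumann--Neumann case to guarantee at least one strictly dominant boundary row. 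Your bookkeeping does check out: the rescaling restores dominance (with equality) in both rows at each interface for Type GI in both regimes, i.e.\ (\ref{eq:interface_ODEs3}) with (\ref{eq:right_of_interface_ODEs}) and (\ref{eq:reformulation}) with (\ref{eq:left_of_interface_ODEs}), and likewise for Type GII, equations (\ref{eq:interface1_ODEs4})--(\ref{eq:interface2_ODEs4}), while leaving interior and boundary rows untouched; irreducibility holds because the sub- and super-diagonal entries are all nonzero. What each approach buys: the paper's argument produces explicit determinant formulas but is tied to a specific configuration, whereas yours is uniform in the number of layers, the boundary conditions, and the mix of interface types, so it actually closes the generality gap that the paper leaves to ``similar analysis.'' The one weak point is your final ``cross-check'' that a diagonal weighting makes $\mathbf{P}\mathbf{A}$ symmetric with a manifestly negative definite quadratic form: as stated the edge-sum form is only semi-definite (constants lie in its kernel) and definiteness again needs the boundary contribution, so this remark is heuristic --- but it is inessential, since your main argument is complete without it.
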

\begin{proof} \change{Recalling that the entries of the tridiagonal matrix $\mathbf{A}$ are identified from the individual finite volume equations (\ref{eq:interior_ODEs}), (\ref{eq:left_BC_ODEs})--(\ref{eq:right_BC_dirichlet_ODEs}), (\ref{eq:interface_ODEs3})--(\ref{eq:left_of_interface_ODEs}), (\ref{eq:interface1_ODEs4}) and (\ref{eq:interface2_ODEs4}), we see that $a_{p+1,p}a_{p,p+1}>0$ (i.e. the product of the sub-diagonal entry in column $p$, $a_{p+1,p}$, and the super-diagonal entry in row $p$, $a_{p,p+1}$, is positive) for all $p = 1,\hdots,N-1$. It follows that $\mathbf{A}$ is similar to the symmetric matrix $\mathbf{S} = \mathbf{DAD}^{-1}$, where $\mathbf{D} = \text{diag}(d_{1},\hdots,d_{N})$ with $d_1 = 1$ and $d_{p+1} = (a_{p,p+1}/a_{p+1,p})^{1/2}$ for all $p = 1,\hdots,N-1$ \citep{bernstein_2005}. Hence, all the eigenvalues of $\mathbf{A}$ are real.}

\change{To prove the eigenvalues of $\mathbf{A}$ are negative, we prove that the real eigenvalues of $\mathbf{S} = \mathbf{D}\widetilde{\mathbf{A}}\mathbf{D}^{-1}$ are positive, where $\widetilde{\mathbf{A}} := -\mathbf{A}$. As $\mathbf{S}$ is symmetric, all its eigenvalues are positive if and only if each of its principal minors are positive \citep{strang_2006}, that is, $\det(\mathbf{S}_k)>0$ for all $k = 1,\hdots,N$, where $\mathbf{S}_k$ is the matrix obtained by removing rows $k+1$ to $N$ and columns $k+1$ to $N$ from $\mathbf{S}$. For ease of explanation, in what follows, we consider the two-layer problem ($m = 2$) with Dirichlet boundary conditions ($b_L = b_R = 0$).}\\

\noindent\textit{Type GI conditions}\\
 \change{Consider Type GI conditions (\ref{eq:general_perfect1})--(\ref{eq:general_perfect2}) with $\theta_{i}\geq 1$ imposed at the interface. In this case,} 
\begin{align*}
\mathbf{S} = \begin{bmatrix}
\frac{2D_1}{h_1^2} & -\frac{D_1}{h_1^2} \\
-\frac{D_1}{h_1^2} & \frac{2D_1}{h_1^2} & -\frac{D_1}{h_1^2} \\
& \ddots & \ddots & \ddots \\
& & -\frac{D_1}{h_1^2} & \frac{2D_1}{h_1^2} & -\frac{D_1}{h_1^2} \\
& & & -\frac{D_1}{h_1^2} & \frac{2D_1}{h_1^2} & s_1\\
& & & &  s_1  & s_2 & s_3\\
& & & & & s_3 & \frac{2D_2}{h_2^2} & -\frac{D_2}{h_2^2}\\
& & & & & & -\frac{D_2}{h_2^2} & \frac{2D_2}{h_2^2} & -\frac{D_2}{h_2^2} \\
& & & & & & & \ddots & \ddots &\ddots \\
& & & & & & & & -\frac{D_2}{h_2^2} & \frac{2D_2}{h_2^2} & -\frac{D_2}{h_2^2} \\
& & & & & & & & & & \frac{2D_2}{h_2^2} & -\frac{D_2}{h_2^2} \\
\end{bmatrix},
\end{align*}
where
\begin{gather*}
s_1 =  \frac{-D_1\sqrt{2D_2\theta_1\gamma_1}}{h_1^{3/2}\sqrt{\gamma_1h_1\theta_1D_2+\gamma_2h_2D_1}},\quad s_2 = \frac{2D_1D_{2}\theta_1}{\gamma_1h_1\theta_1D_{2}+\gamma_{2}h_{2}D_1}\left[\frac{\gamma_1}{h_1}+\frac{\gamma_{2}}{\theta_1h_{2}}\right],\\
s_3 =  \frac{-D_2\sqrt{2D_1\gamma_2}}{h_2^{3/2}\sqrt{\gamma_1h_1\theta_1D_2+\gamma_2h_2D_1}}.
\end{gather*}
As $\mathbf{S}$ is \change{symmetric and tridiagonal}, the principal minors satisfy the recurrence relation:
\begin{align}
\label{eq:recurrence}
\det(\mathbf{S}_k) = d_k\det(\mathbf{S}_{k-1}) - e_k^2\det(\mathbf{S}_{k-2}),
\end{align}
where $d_k$ and $e_k$ are the diagonal and sub-diagonal elements, respectively, located in the $k$th row of $\mathbf{S}_k$. Applying the recurrence relation to each of the $N$ rows of $\mathbf{S}$ yields the following formulae for the principal minors of $\mathbf{S}$:
\begin{gather}
\label{eq:recurrence_first}
\det({\mathbf{S}}_{k}) = \frac{2D_{1}}{h_{1}^{2}}\det({\mathbf{S}}_{k-1}) - \frac{D_{1}^{2}}{h_{1}^{4}}\det({\mathbf{S}}_{k-2}), \quad k = 1,\hdots,n-1,\\
\label{eq:recurrence_n}
\det({\mathbf{S}}_{n}) = \frac{2D_{1}D_{2}(\gamma_{1}\theta_{1}h_{2}+\gamma_{2}h_{1})}{(\gamma_{1}h_{1}\theta_{1}D_{2}+\gamma_{2}h_{2}D_{1})h_{1}h_{2}}\det({\mathbf{S}}_{n-1}) - \frac{2D_{1}^{2}D_{2}\theta_{1}\gamma_{1}}{(\gamma_{1}h_{1}\theta_{1}D_{2}+\gamma_{2}h_{2}D_{1})h_{1}^3}\det({\mathbf{S}}_{n-2}),\\
\label{eq:recurrence_n+1}
\det({\mathbf{S}}_{n+1}) = \frac{2D_{2}}{h_{2}^{2}}\det({\mathbf{S}}_{n}) - \frac{2D_{1}D_{2}^2\gamma_{2}}{(\gamma_{1}h_{1}\theta_{1}D_{2}+\gamma_{2}h_{2}D_{1})h_{2}^{3}}\det({\mathbf{S}}_{n-1}),\\
\label{eq:recurrence_second}
\det({\mathbf{S}}_{k}) = \frac{2D_{2}}{h_{2}^{2}}\det({\mathbf{S}}_{k-1}) - \frac{D_{2}^{2}}{h_{2}^{4}}\det({\mathbf{S}}_{k-2}), \quad k = n+2,\hdots,N.
\end{gather}
With $\det({\mathbf{S}}_{1}) = 2D_{1}/h_{1}^{2}$ and $\det({\mathbf{S}}_{2}) = 3D_{1}^{2}/h_{1}^{4}$, the recurrence relation (\ref{eq:recurrence_first}) has solution
\begin{align}
\label{eq:recurrence_first_solution}
\det(\change{\mathbf{S}}_{k}) = (1+k)\left(\frac{D_{1}}{h_{1}^{2}}\right)^{k}, \quad k = 1,\hdots,n-1.
\end{align}
Substituting (\ref{eq:recurrence_first_solution}) into (\ref{eq:recurrence_n}) yields:
\begin{align}
\det({\mathbf{S}}_{n}) = \frac{2D_{1}^{n}D_{2}(\gamma_{1}\theta_{1}h_{2}+n\gamma_{2}h_{1})}{(\gamma_{1}h_{1}\theta_{1}D_{2}+\gamma_{2}h_{2}D_{1})h_{1}^{2n-1}h_{2}}.
\end{align}
Similar \change{analysis yields} the following results for the solutions of (\ref{eq:recurrence_n+1}) and (\ref{eq:recurrence_second}):
\begin{gather}
\det({\mathbf{S}}_{n+1}) = \frac{2D_{1}^{n}D_{2}^{2}[2\gamma_{1}\theta_{1}h_{2}+n\gamma_{2}h_{1}]}{(\gamma_{1}h_{1}\theta_{1}D_{2}+\gamma_{2}h_{2}D_{1})h_{1}^{2n-1}h_{2}^{3}},\\
\label{eq:recurrence_second_solution}
\det({\mathbf{S}}_{k}) = \frac {2{D_{{1}}}^{n}D_{{2}}^{k-n+1} \left[(k-n+1)\gamma_{{1}
}h_{{2}}\theta_{{1}}+{h_{{1}}}n\gamma_{{2}} \right] }{ \left( \gamma_{{1}}\theta_{{1}}h_{{
1}}D_{{2}}+\gamma_{{2}}h_{{2}}D_{{1}} \right)h_1^{2n-1}h_{{2}}^{2(k-n)+1 }}, \quad k = n+2,\hdots,N.
\end{gather}
As all constants appearing in the solutions (\ref{eq:recurrence_first_solution})--(\ref{eq:recurrence_second_solution}) are positive, we have that \change{$\det(\mathbf{S}_k)>0$ for all $k = 1,\hdots,N$. Therefore, all the eigenvalues of $\mathbf{S}$ are positive and hence all the eigenvalues of $\mathbf{A}$ are negative.} 
\\

\noindent\textit{Type GII conditions}\\
\revision{The proof for Type GII conditions (\ref{eq:general_imperfect1})--(\ref{eq:general_imperfect2}) is very similar to the one above and thus omitted.}\\

\noindent The above analysis is valid for the two-layer problem with Dirichlet conditions on both boundaries. Similar analysis can be used to prove that the eigenvalues of $\mathbf{A}$ are negative for problems with more than $m = 2$ layers and other choices of the boundary conditions.
\end{proof}

\change{Using Theorem \ref{thm:eigenvalues} and the fact that $\tau>0$,} clearly $\sigma(\tau\mathbf{A})\subset R_{C}\subset R_{B}$ and hence both the backward Euler (\ref{eq:backward_euler}) and Crank-Nicolson (\ref{eq:crank_nicolson}) schemes are unconditionally stable. \change{We note also that both $\mathbf{I}-\tau\mathbf{A}$ and $\mathbf{I}-\frac{\tau}{2}\mathbf{A}$, appearing in (\ref{eq:backward_euler}) and (\ref{eq:crank_nicolson}) respectively, are invertible as both these matrices have strictly positive eigenvalues.} By bounding the spectrum of $\tau\mathbf{A}$, restrictions on the time step that ensure stability of the forward Euler scheme (\ref{eq:forward_euler}) can be derived. To achieve this we use the Gershgorin circle theorem \citep{thomas_1995}: \change{let} $\mathcal{D}_{p}$ be the Gershgorin disc corresponding to the $p$th row of $\tau\mathbf{A}$:
{
\begin{align}
\label{eq:Gershgorin_disc}
\mathcal{D}_{p} = \left\{z\in\mathbb{C} : |z+c_{p}|\leq r_{p}\right\},\quad
c_{p} = -\tau a_{p,p},\quad r_{p} = \tau(|a_{p,p-1}| + |a_{p,p+1}|).
\end{align}
The} Gershgorin circle theorem states that every eigenvalue of $\tau\mathbf{A}$ lies within at least one of the Gershgorin discs $\mathcal{D}_{p}$, $p = 1,\hdots,N$. {Therefore, by identifying constraints on the time step $\tau$ that ensure that each of the Gershgorin discs lie in the stability region $R_{F}$ (Figure \ref{fig:stability_regions}a), we can derive sufficient conditions for stability \change{of} the forward Euler scheme}. Since $a_{p,p} < 0$ for each of the finite volume equations \change{(\ref{eq:interior_ODEs}), (\ref{eq:left_BC_ODEs})--(\ref{eq:right_BC_dirichlet_ODEs}), (\ref{eq:interface_ODEs3})--(\ref{eq:left_of_interface_ODEs}), (\ref{eq:interface1_ODEs4}) and (\ref{eq:interface2_ODEs4})}, we always have $c_{p}>0$ and hence all the Gershgorin discs are centered along the negative real axis. {Furthermore, as the eigenvalues of $\mathbf{A}$ are real, the Gershgorin discs reduce to intervals along the real axis: $\mathcal{D}_{p} = \left\{x\in\mathbb{R} : |x+c_{p}|\leq r_{p}\right\}$.} {Finally, as the eigenvalues of $\mathbf{A}$ are negative \change{(Theorem \ref{thm:eigenvalues})}, for stability of the forward Euler scheme (\ref{eq:forward_euler}) it is sufficient that:
\begin{align}
\label{eq:forward_condition}
c_{p} + r_{p} \leq 2,
\end{align}
for all $p = 1,\hdots,N$. In the following sections, we derive a \change{set of sufficient stability conditions for the forward Euler scheme} by applying the above constraint to the individual finite volume equations derived in section \ref{sec:spatial_discretisation}.}
\subsubsection{Interior nodes}
Consider the row of the matrix $\tau\mathbf{A}$ corresponding to the finite volume equation (\ref{eq:interior_ODEs}) for nodes located in the interior of the layers. The Gershgorin disc associated with this row takes the form of (\ref{eq:Gershgorin_disc}) with:
\begin{align}
\label{eq:C1}
c_p = \frac{2D_i\tau}{h_i^2}, \quad r_p = \frac{2D_i\tau}{h_i^2}.
\end{align}
\change{In this case, applying the constraint (\ref{eq:forward_condition}) yields the following restriction on the time step}:
\begin{align}
\label{eq:tau_interior}
\tau \leq \frac{h_i^2}{2D_i},
\end{align}
for $i = 1,\hdots,m$, which is precisely the classical stability condition for the single-layer diffusion problem applied to each layer.
\subsubsection{Boundary nodes}
Consider the row of the matrix $\tau\mathbf{A}$ corresponding to the finite volume equation (\ref{eq:left_BC_ODEs}) for the node located at the left boundary of the domain. The Gershgorin disc associated with this row takes the form of (\ref{eq:Gershgorin_disc}) with:
\begin{align}
\label{eq:C2}
c_p = \frac{2D_1\tau}{h_1}\left[\frac{1}{h_1}+\frac{a_L}{b_L}\right], \quad r_p = \frac{2D_1\tau}{h_1^2}.
\end{align}
\change{Applying the constraint (\ref{eq:forward_condition})} yields:
\begin{align}
\label{eq:tau_left}
\tau \leq  \left[\frac{2b_L}{2b_L+a_Lh_1}\right]\frac{h_1^2}{2D_1}.
\end{align}
In the case of a Neumann condition on the left boundary ($a_{L} = 0$ and $b_{L}\neq 0$), the restriction (\ref{eq:tau_left}) is identical to (\ref{eq:tau_interior}) and thus no additional stability restriction is imposed. Recall that for the case of a Dirichlet boundary condition at the left boundary, $b_{L} = 0$, a finite volume equation is not included at the left boundary, which leads to the modified finite volume equation (\ref{eq:left_BC_dirichlet_ODEs}) for the node immediately to the right of the left boundary ($u_{1,1}$). The Gershgorin disc for the row of $\tau\mathbf{A}$ corresponding to this finite volume equation is given by (\ref{eq:Gershgorin_disc}) with:  
\begin{align}
\label{eq:C3}
c_p = \frac{2D_1\tau}{h_1^2}, \quad r_p = \frac{D_1\tau}{h_1^2}.
\end{align}
\change{In this case, applying the constraint} (\ref{eq:forward_condition}) yields:
\begin{align}
\label{eq:left_stability}
\tau \leq \frac{4}{3}\frac{h_1^2}{2D_1}.
\end{align}
\change{Clearly, (\ref{eq:left_stability}) is less restrictive than (\ref{eq:tau_interior})} and hence a Dirichlet or Neumann condition at the left boundary provides no additional stability restriction on the time step.

Similarly, for the right boundary, arising from the finite volume equations \change{(\ref{eq:right_BC_ODEs}) and (\ref{eq:right_BC_dirichlet_ODEs})}, we obtain the stability conditions:
\begin{align}
\label{eq:right_stability}
\tau \leq \frac{4}{3}\frac{h_m^2}{2D_m},
\end{align}
\change{for a Dirichlet boundary condition and 
\begin{align}
\label{eq:tau_right}
\tau \leq  \left[\frac{2b_R}{2b_R+a_Rh_m}\right]\frac{h_m^2}{2D_m},
\end{align}
for a Neumann or Robin boundary condition.}
\change{We remark that the} stability conditions (\ref{eq:tau_left}) and (\ref{eq:tau_right}) are equivalent to those derived by \citet{thomas_1995} for the case of the single-layer diffusion problem.
\subsubsection{Interface nodes}
\label{sec:interface}
We now derive stability conditions for the finite volume equations that arise from spatial discretisation of the interface conditions.
\\

\noindent\textit{Type GI conditions}\\
\change{Consider the Type GI interface conditions (\ref{eq:general_perfect1})--(\ref{eq:general_perfect2}) and the case $\theta_{i}\geq 1$. Recall that for $\theta_{i}\geq 1$ the finite volume equations (\ref{eq:interface_ODEs3}) and (\ref{eq:right_of_interface_ODEs}) are used in the formulation. The Gershgorin disc associated with the row of the matrix $\tau\mathbf{A}$ corresponding to the finite volume equation (\ref{eq:interface_ODEs3}) takes the form  (\ref{eq:Gershgorin_disc}) with:}
\begin{align*}
c_p = \frac{2D_iD_{i+1}\theta_i\tau}{\gamma_ih_i\theta_iD_{i+1}+\gamma_{i+1}h_{i+1}D_i}\left[\frac{\gamma_i}{h_i}+\frac{\gamma_{i+1}}{\theta_ih_{i+1}}\right], \quad r_p =   \frac{2D_iD_{i+1}\theta_i\tau}{\gamma_ih_i\theta_iD_{i+1}+\gamma_{i+1}h_{i+1}D_i}\left[\frac{\gamma_i}{h_i}+\frac{\gamma_{i+1}}{h_{i+1}}\right].
\end{align*}
\change{In this case, the constraint (\ref{eq:forward_condition}) yields the following restriction on the time step:
\begin{gather}
\label{eq:coeff0}
\tau\leq \frac{(\gamma_{i}h_{i}\theta_iD_{i+1} + \gamma_{i+1}h_{i+1}D_{i})h_{i}h_{i+1}}{\left(2\theta_i\gamma_{i}h_{i+1}+(1+\theta_i)\gamma_{i+1}h_{i}\right)D_{i}D_{i+1}}.
\end{gather}
Consider $\theta_{i} = 1$. In this case,} the stability condition (\ref{eq:coeff0}) can be expressed in terms of the classical stability condition \change{in the $i$th layer (\ref{eq:tau_interior})} as follows:
\begin{align}
\label{eq:coeff1}
\tau\leq \left[\frac{\gamma_{i}h_{i}h_{i+1}D_{i+1} + \gamma_{i+1}h_{i+1}^{2}D_{i}}{\gamma_{i}h_{i}h_{i+1}D_{i+1} + \gamma_{i+1}h_{i}^{2}D_{i+1}}\right]\frac{h_{i}^{2}}{2D_{i}},
\end{align}
or equivalently \change{in the ($i+1$)th} layer as:
\begin{align}
\label{eq:coeff2}
\tau\leq \left[\frac{\gamma_{i+1}h_{i}h_{i+1}D_{i} + \gamma_{i}h_{i}^{2}D_{i+1}}{\gamma_{i+1}h_{i}h_{i+1}D_{i} + \gamma_{i}h_{i+1}^{2}D_{i}}\right]\frac{h_{i+1}^{2}}{2D_{i+1}}.
\end{align}
Note that the coefficients of $h_{i}^{2}/(2D_{i})$ and $h_{i+1}^{2}/(2D_{i})$ in equations (\ref{eq:coeff1}) and (\ref{eq:coeff2}) are less than one provided $h_{i+1}^{2}D_{i}<h_{i}^{2}D_{i+1}$ and $h_{i+1}^{2}D_{i}>h_{i}^{2}D_{i+1}$, respectively. \change{Since it is not possible to satisfy these two conditions simultaneously,} it is not possible for  (\ref{eq:coeff0}) to be more restrictive than (\ref{eq:tau_interior}) in both the $i$th and $(i+1)$th layers. \change{For $\theta_{i} > 1$, however, care must be taken when choosing the time step as it is not possible to make such a claim. For example, for the special case $\gamma_{i} = \gamma_{i+1} = D_{i} = D_{i+1}$ and $h_{i} = h_{i+1}$, (\ref{eq:coeff0}) is more restrictive than (\ref{eq:tau_interior}) in both the $i$th and ($i+1$)th layers.}

\change{Next, consider the Gershgorin disc associated with the finite volume equation (\ref{eq:right_of_interface_ODEs}) that governs the temporal behaviour of the solution at the node located immediately to the right of the $i$th interface ($u_{i+1,1}$). This Gershgorin disc takes the form of (\ref{eq:Gershgorin_disc}) with:}
\begin{align*}
c_p = \frac{2D_{i+1}\tau}{h_{i+1}^2}, \quad r_p = \frac{D_{i+1}\tau}{h_{i+1}^2}\left[\frac{1}{\theta_i}+1\right].
\end{align*}
\change{In this case,} the constraint (\ref{eq:forward_condition}) yields:
\begin{align}
\label{eq:coeff2.3}
\tau \leq \left[\frac{4\theta_i}{3\theta_i+1}\right]\frac{h_{i+1}^2}{2D_{i+1}} .
\end{align}
\change{Clearly, if $\theta_i > 1$, (\ref{eq:coeff2.3}) is less restrictive than  (\ref{eq:tau_interior}) and equally restrictive if $\theta_i = 1$. Therefore, no additional stability restriction is imposed.} 

\change{We now consider Type GI interface conditions (\ref{eq:general_perfect1})--(\ref{eq:general_perfect2}) with $\theta_{i}< 1$, where the finite volume equations (\ref{eq:reformulation}) and (\ref{eq:left_of_interface_ODEs}) are instead utilised in the formulation.
The Gershgorin disc of $\tau\mathbf{A}$ associated with the finite volume equation (\ref{eq:reformulation}) takes the form (\ref{eq:Gershgorin_disc}) with:}
\begin{align*}
c_p = \frac{2D_iD_{i+1}\tau}{\gamma_ih_i\theta_iD_{i+1}+\gamma_{i+1}h_{i+1}D_i}\left[\frac{\theta_i\gamma_i}{h_i}+\frac{\gamma_{i+1}}{h_{i+1}}\right], \quad r_p =   \frac{2D_iD_{i+1}\tau}{\gamma_ih_i\theta_iD_{i+1}+\gamma_{i+1}h_{i+1}D_i}\left[\frac{\gamma_i}{h_i}+\frac{\gamma_{i+1}}{h_{i+1}}\right],
\end{align*}
\change{and applying the constraint (\ref{eq:forward_condition}) yields:
\begin{gather}
\label{eq:coeff2.4}
\tau\leq \frac{(\gamma_{i}h_{i}\theta_iD_{i+1} + \gamma_{i+1}h_{i+1}D_{i})h_{i}h_{i+1}}{\left((1+\theta_i)\gamma_{i}h_{i+1}+2\gamma_{i+1}h_{i}\right)D_{i}D_{i+1}}.
\end{gather}
As was the case for the stability condition (\ref{eq:coeff0}), (\ref{eq:coeff2.4}) may or may not be more restrictive than (\ref{eq:tau_interior}) so care must be taken when choosing the time step. }

\change{The Gershgorin disc associated with the row of $\tau\mathbf{A}$ corresponding to the finite volume equation (\ref{eq:left_of_interface_ODEs}) takes the form of (\ref{eq:Gershgorin_disc}) with:}
\begin{align}
\label{eq:C5.2}
c_p = \frac{2D_{i}\tau}{h_{i}^2}, \quad r_p = \frac{(1+\theta_i)D_{i}\tau}{h_{i}^2}.
\end{align}
\change{Applying the constraint (\ref{eq:forward_condition}) yields:}
\begin{align}
\label{eq:coeff2.7}
\tau \leq \left[\frac{4}{3\theta_i+1}\right]\frac{h_{i}^2}{2D_{i}} .
\end{align}
As this formulation is used for the case of $\theta_i < 1$, the condition (\ref{eq:coeff2.7}) is no more restrictive than (\ref{eq:tau_interior}).

\change{In summary, for $\theta_{i} = 1$, the Type GI conditions (\ref{eq:general_perfect1})--(\ref{eq:general_perfect2}) give rise to stability conditions for the forward Euler scheme (\ref{eq:forward_euler}) that are less restrictive than the classical stability condition (\ref{eq:tau_interior}) in the $i$th and ($i+1$)th layers. However, care must be taken for $\theta_{i}\neq 1$, as the stability conditions (\ref{eq:coeff0}) ($\theta_{i} > 1$) and (\ref{eq:coeff2.4}) ($\theta_{i}<1$) may or may not be more restrictive.}\\

\noindent\textit{Type GII conditions}\\
We now study stability restrictions arising from the Type GII interface conditions \change{(\ref{eq:general_imperfect1})--(\ref{eq:general_imperfect2})}. Applying the Gershgorin circle theorem to the row of the matrix $\tau\mathbf{A}$ corresponding to the finite volume equation (\ref{eq:interface1_ODEs4}) yields a Gershgorin disc of the form (\ref{eq:Gershgorin_disc}) with:
\begin{align}
\label{eq:C6}
c_p  = \frac{2D_i\tau}{\gamma_ih_i}\left[H_i+\frac{\gamma_i}{h_i}\right], \quad r_p =  \frac{2D_i\tau}{\gamma_ih_i}\left[\theta_iH_i+\frac{\gamma_i}{h_i}\right].
\end{align}
Similarly, for the finite volume equation (\ref{eq:interface2_ODEs4})  we have:
\begin{align}
\label{eq:C7}
c_p = \frac{2D_{i+1}\tau}{\gamma_{i+1}h_{i+1}}\left[\theta_iH_i+\frac{\gamma_{i+1}}{h_{i+1}}\right],\quad r_p = \frac{2D_{i+1}\tau}{\gamma_{i+1}h_{i+1}}\left[H_i+\frac{\gamma_{i+1}}{h_{i+1}}\right].
\end{align}
Applying the constraint (\ref{eq:forward_condition}) to (\ref{eq:C6}) and (\ref{eq:C7}) yields:
\begin{gather}
\label{eq:coeff3}
\tau \leq \left[\frac{2\gamma_i}{(1+\theta_i)H_ih_{i}+2\gamma_{i}}\right]\frac{h_i^2}{2D_i},\\
\label{eq:coeff4}
\tau \leq \left[\frac{2\gamma_{i+1}}{(1+\theta_i)H_ih_{i+1}+2\gamma_{i+1}}\right]\frac{h_{i+1}^2}{2D_{i+1}},
\end{gather}
\change{respectively.} The coefficients of $h_{i}^{2}/(2D_{i})$ and $h_{i+1}^{2}/(2D_{i})$ in equations \change{(\ref{eq:coeff3}) and (\ref{eq:coeff4})} are less than one if \change{$(1+\theta_{i})H_{i}h_{i} > 0$ and $(1+\theta_{i})H_{i}h_{i+1} > 0$, which is always true,} and hence the stability conditions (\ref{eq:coeff3}) and (\ref{eq:coeff4}) are \change{always} more restrictive than the classical stability condition (\ref{eq:tau_interior}) in the $i$th and $(i+1)$th layers, respectively. Hence, under the forward Euler scheme (\ref{eq:forward_euler}), the finite volume equations (\ref{eq:interface1_ODEs4}) and (\ref{eq:interface2_ODEs4}) impose the following stability restriction on the time step:
\change{\begin{align}
\tau\leq\min\left\{\left[\frac{2\gamma_i}{(1+\theta_i)H_ih_{i}+2\gamma_{i}}\right]\frac{h_i^2}{2D_i},\left[\frac{2\gamma_{i+1}}{(1+\theta_i)H_ih_{i+1}+2\gamma_{i+1}}\right]\frac{h_{i+1}^2}{2D_{i+1}}\right\}.
\end{align}
Table \ref{tab:stability} summarises each of the stability conditions derived in this section for the forward Euler scheme (\ref{eq:forward_euler}).}

\begin{table}[!t]
\def\arraystretch{1.5}
\begin{center}
\begin{tabular}{ |l|c|c| } 
 \hline
    & \textbf{Finite volume} & \\[-0.3cm] 
  \textbf{Type of node} & \textbf{equation} & \textbf{Stability condition} \\ 
 \hline
\textbf{Interior} & (\ref{eq:interior_ODEs}) & $\tau\leq\frac{h_i^2}{2D_i}$ \\ 
  \hline
  \textbf{Left boundary} & &\\
   Dirichlet &   (\ref{eq:left_BC_dirichlet_ODEs})& No additional restriction \\ 
 Neumann & (\ref{eq:left_BC_ODEs}) &  No additional restriction
 \\ 
  Robin ($b_L \neq 0$) &(\ref{eq:left_BC_ODEs}) &  $\tau \leq  \left[\frac{2b_L}{2b_L+a_Lh_1}\right]\frac{h_1^2}{2D_1}$
 \\ 
  \hline
 \textbf{Right boundary} & &\\
 Dirichlet &  (\ref{eq:right_BC_dirichlet_ODEs}) & No additional restriction \\ 
 Neumann & (\ref{eq:right_BC_ODEs}) &  No additional restriction \\ 
 Robin ($b_R \neq 0$) & (\ref{eq:right_BC_ODEs})  &  $\tau \leq  \left[\frac{2b_R}{2b_R+a_Rh_m}\right] \frac{h_m^2}{2D_m} \quad $ \\ 
 \hline
 \textbf{Interface} & &\\
 Type GI \change{($\theta_{i} \geq 1$)} & (\ref{eq:interface_ODEs3}) &  No additional restriction ($\theta_{i} = 1$) \\
 & & $\tau\leq \frac{(\gamma_{i}h_{i}\theta_iD_{i+1} + \gamma_{i+1}h_{i+1}D_{i})h_{i}h_{i+1}}{\left(2\theta_i\gamma_{i}h_{i+1}+(1+\theta_i)\gamma_{i+1}h_{i}\right)D_{i}D_{i+1}}$ ($\theta_{i}\neq 1$)\\ 
   & (\ref{eq:right_of_interface_ODEs}) &  No additional restriction \\ 
 Type GI \change{($\theta_{i} < 1$)}   & (\ref{eq:reformulation}) &  $\tau\leq \frac{(\gamma_{i}h_{i}\theta_iD_{i+1} + \gamma_{i+1}h_{i+1}D_{i})h_{i}h_{i+1}}{\left((1+\theta_i)\gamma_{i}h_{i+1}+2\gamma_{i+1}h_{i}\right)D_{i}D_{i+1}}$ \\ 
    & (\ref{eq:left_of_interface_ODEs}) &  No additional restriction \\ 
Type GII & (\ref{eq:interface1_ODEs4})--(\ref{eq:interface2_ODEs4}) & $\tau\leq\min\left\{\left[\frac{2\gamma_i}{(1+\theta_i)H_ih_{i}+2\gamma_{i}}\right]\frac{h_i^2}{2D_i},\left[\frac{2\gamma_{i+1}}{(1+\theta_i)H_ih_{i+1}+2\gamma_{i+1}}\right]\frac{h_{i+1}^2}{2D_{i+1}}\right\}$\\
 \hline
\end{tabular}
\caption{Stability conditions for the forward Euler scheme (\ref{eq:forward_euler}) arising from the different \change{finite volume equations. Note that both the backward Euler (\ref{eq:backward_euler}) and Crank-Nicolson (\ref{eq:crank_nicolson}) schemes are not included as both are unconditionally stable}. {No additional restriction means that the stability condition arising from the finite volume equation is not more restrictive than the interior stability condition \change{(\ref{eq:tau_interior}) in both the $i$th and ($i+1$)th layers.}}}
\label{tab:stability}
\end{center}
\end{table}
\subsection{Convergence}
We demonstrate convergence of our finite volume method via the Lax equivalence theorem \citep{lax_1956}, which states that a consistent finite difference/volume method for a well-posed linear initial value problem is convergent if and only if it is stable \citep{strikwerda_1989}. Note that the linear initial value problem (\ref{eq:PDE})--(\ref{eq:right_BC}) is well-posed as it is known to have an exact solution \change{(see, e.g., \citep[Appendix C]{carr_2016c})}. Hence, in order to prove that our finite volume scheme is convergent, we prove that it is consistent under the assumption that the stability conditions, derived in Section \ref{sec:stability} and summarised in Table \ref{tab:stability}, hold.

As an example, let us consider the forward Euler discretisation of the finite volume equation (\ref{eq:interface_ODEs3}):
\begin{align}
\label{eq:forward_GI1}
u_{i,n}^{(k+1)} = u_{i,n}^{(k)} +\frac{2D_iD_{i+1}\theta_i\tau}{\gamma_ih_i\theta_iD_{i+1}+\gamma_{i+1}h_{i+1}D_i}\left[\frac{\gamma_i}{h_i}u_{i,n-1}^{(k)} - \left(\frac{\gamma_i}{h_i}+\frac{\gamma_{i+1}}{\theta_ih_{i+1}}\right)u_{i,n}^{(k)}+\frac{\gamma_{i+1}}{h_{i+1}}u_{i+1,1}^{(k)}\right],
\end{align}
\change{where the notation $u_{i,j}^{(k)}$ is used to denote the discrete numerical approximation to $u_{i}(x_{i,n},t_{k})$, the exact solution in the $i$th layer, $u_{i}(x,t)$, evaluated at $x = x_{i,j}$ and $t = t_{k}$.} For consistency, we require that the local truncation error tends to zero as the time step and grid spacing tend to zero. The local truncation error (LTE) corresponding to (\ref{eq:forward_GI1}) is defined as:
\begin{multline}
\label{eq:LTE1}
\text{LTE} = \frac{u_i(x_{i,n},t_{k+1}) - u_{i}(x_{i,n},t_k)}{\tau}-\frac{2D_iD_{i+1}\theta_i}{\gamma_ih_iD_{i+1}\theta_i+\gamma_{i+1}h_{i+1}D_i}\Bigg[\frac{\gamma_i}{h_i}u_{i}(x_{i,n-1},t_k) \\ -\left(\frac{\gamma_i}{h_i}+\frac{\gamma_{i+1}}{\theta_ih_{i+1}}\right)u_{i}(x_{i,n},t_k)+\frac{\gamma_{i+1}}{h_{i+1}}u_{i+1}(x_{i+1,1},t_k)\Bigg].
\end{multline}
Expanding the exact solution in equation (\ref{eq:LTE1}) using a Taylor series about $t = t_{k}$ and $x = x_{i,n}$ and simplifying the result produces:
\begin{gather}
\label{eq:LTE3}
\text{LTE} = \frac{\partial u_{i}}{\partial t}(x_{i,n},t_k)+O(\tau)-\frac{2D_iD_{i+1}\theta_i}{\gamma_ih_i\theta_iD_{i+1}+\gamma_{i+1}h_{i+1}D_i}\Bigg[\frac{\gamma_{i+1}}{h_{i+1}}\Bigg(u_{i+1}(x_{i,n},t_{k}) - \frac{u_{i}(x_{i,n},t_{k})}{\theta_i}\Bigg)\nonumber\\ + \gamma_{i+1}\frac{\partial u_{i+1}}{\partial x}(x_{i,n},t_{k}) - \gamma_{i}\frac{\partial u_{i}}{\partial x}(x_{i,n},t_{k})\nonumber\\ + \frac{\gamma_ih_i}{2}\frac{\partial^2 u_{i}}{\partial x^2}(x_{i,n},t_k) +\frac{\gamma_{i+1}h_{i+1}}{2}\frac{\partial^2 u_{i+1}}{\partial x^2}(x_{i,n},t_k) + O(h_i^2) + O(h_{i+1}^2)\Bigg].
\end{gather}
Applying the interface conditions (\ref{eq:general_perfect1}) and (\ref{eq:general_perfect2}) and noting the equality implied by the diffusion equation (\ref{eq:PDE}) yields:
\begin{multline}
\label{eq:LTE5}
\text{LTE} = \frac{\partial u_{i}}{\partial t}(x_{i,n},t_k)+O(\tau)-\frac{D_iD_{i+1}\theta_i}{\gamma_ih_i\theta_iD_{i+1}+\gamma_{i+1}h_{i+1}D_i}\Bigg[\frac{\gamma_ih_i}{D_i}\frac{\partial u_{i}}{\partial t}(x_{i,n},t_k) + O(h_i^2) \\+\frac{\gamma_{i+1}h_{i+1}}{D_{i+1}}\frac{\partial u_{i+1}}{\partial t}(x_{i,n},t_k) + O(h_{i+1}^2)\Bigg].
\end{multline}
As $u_{i}(x_{i,n},t) =\theta_i u_{i+1}(x_{i,n},t)$, \change{it follows that} the temporal derivatives of each function also follow a similar relation: $\partial u_{i}/\partial t (x_{i,n},t) = \theta_i\partial u_{i+1}/\partial t (x_{i,n},t)$. Substituting this latter result into equation (\ref{eq:LTE5}) and simplifying gives:
\begin{multline}
\label{eq:LTE6}
\text{LTE} = \frac{\partial u_{i}}{\partial t}(x_{i,n},t_k)+O(\tau)-\frac{D_iD_{i+1}\theta_i}{\gamma_ih_i\theta_iD_{i+1}+\gamma_{i+1}h_{i+1}D_i}\Bigg[\frac{\gamma_ih_i}{D_i}\frac{\partial u_{i}}{\partial t}(x_{i,n},t_k) + O(h_i^2) \\+\frac{\gamma_{i+1}h_{i+1}}{\theta_iD_{i+1}}\frac{\partial u_{i}}{\partial t}(x_{i,n},t_k) + O(h_{i+1}^2)\Bigg].
\end{multline}
\change{Combining the temporal derivative terms, (\ref{eq:LTE6}) finally} simplifies to $\change{\text{LTE}} = O(\tau + h_{i} + h_{i+1})$.

Since $\text{LTE} \rightarrow 0$ as $\tau \rightarrow 0$, $h_i \rightarrow 0$ and $h_{i+1} \rightarrow 0$}, the finite volume discretisation (\ref{eq:forward_GI1}) is consistent. Applying similar analysis to the other types of finite volume equations \change{(\ref{eq:interior_ODEs}), (\ref{eq:left_BC_ODEs})--(\ref{eq:right_BC_dirichlet_ODEs}), (\ref{eq:interface_ODEs3})--(\ref{eq:left_of_interface_ODEs}), (\ref{eq:interface1_ODEs4}) and (\ref{eq:interface2_ODEs4})}  leads to the same conclusion for all three time discretisation methods. Therefore, \change{we conclude that the finite volume method developed in Section \ref{sec:discretisation}} is convergent provided the time step is chosen to ensure stability.  \section{Numerical results}
\label{sec:numerical_experiments}
\subsection{Error analysis}
\label{sec:error_analysis}
To assess the accuracy of our new finite volume method, we consider four test cases considered previously by \citet{carr_2016c}, which we will refer to as Cases \change{A--D}. These cases involve $m = 2$ layers, domain $[l_0,l_1,l_2] = [0,0.5,1]$, diffusivities $D_1 = 1$ and $D_2 = 0.1$, and external boundary data $a_L = 1$, $b_L = 0$, $c_L = 1$, $a_R = 0$, $b_R = 1$, $c_R = 0$. The test cases assess each of the \change{four} types of interface conditions:
\begin{itemize}
\item Case A: Type I conditions (\ref{eq:type1interface1})--(\ref{eq:type1interface2}).
\item Case B: Type II conditions (\ref{eq:type2interface1})--(\ref{eq:type2interface2}) and $H_1 = 0.5$.
\item Case C: Type IV conditions (\ref{eq:type4interface1})--(\ref{eq:type4interface2}) with $\theta_1 = 1.2$.
\item Case D: Type III conditions (\ref{eq:type3interface1})--(\ref{eq:type3interface2}) with $\gamma_1 = \gamma_2 = 2$.
\end{itemize}
 
\begin{figure}[!t]
\centering
\subfloat[Case A]{\includegraphics[width=0.5\textwidth]{./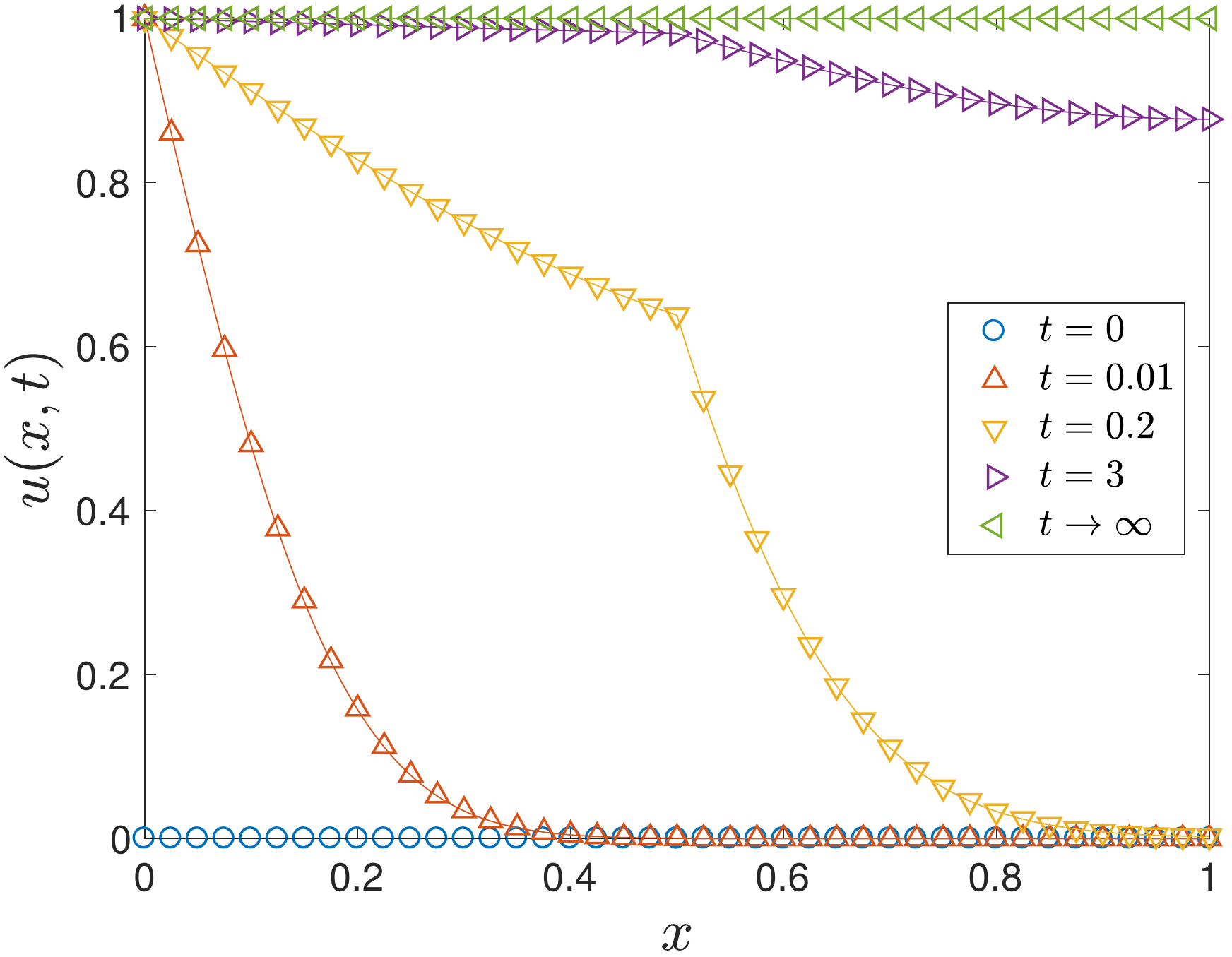}}
\subfloat[Case B]{\includegraphics[width=0.5\textwidth]{./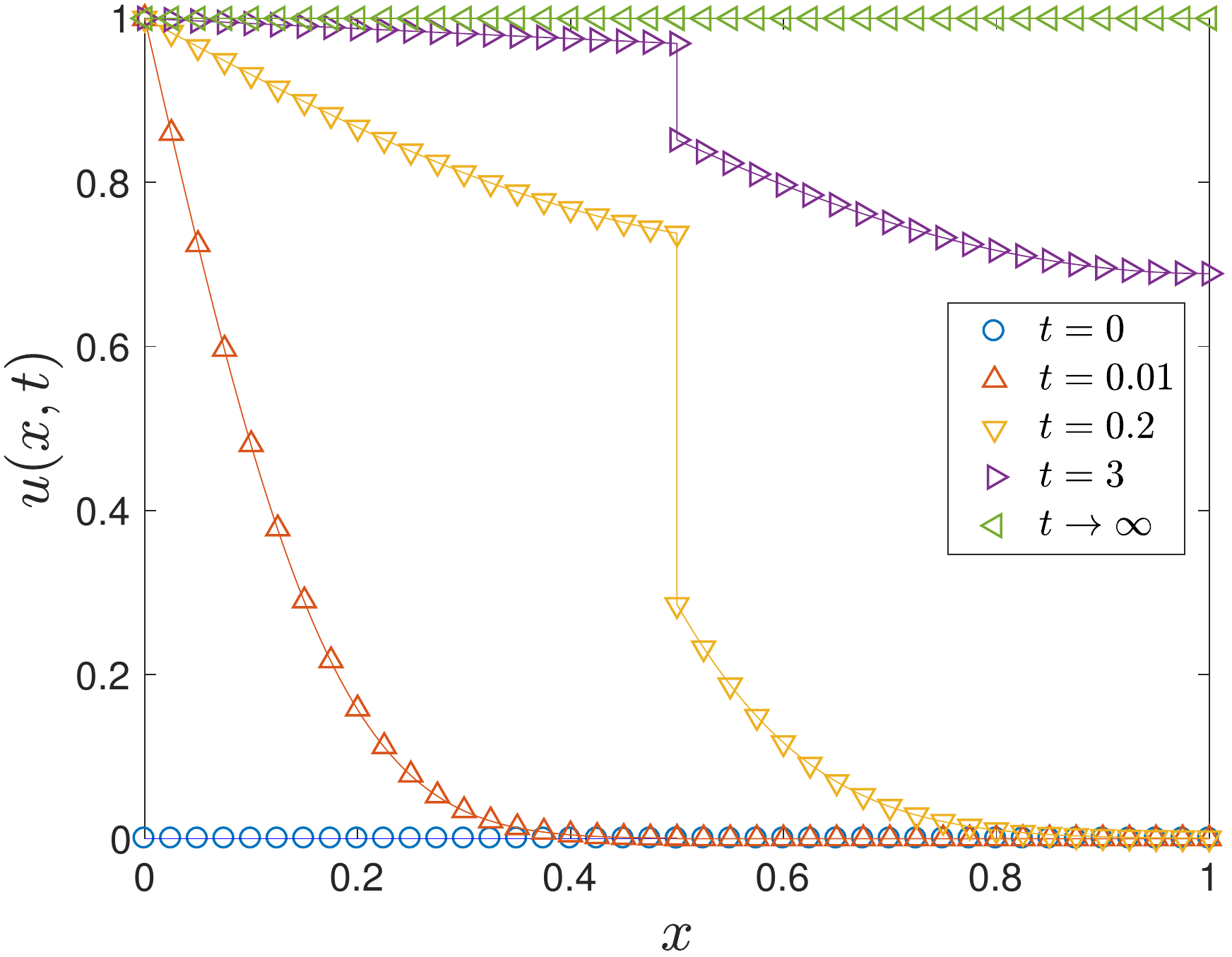}}\\
\subfloat[Case C]{\includegraphics[width=0.5\textwidth]{./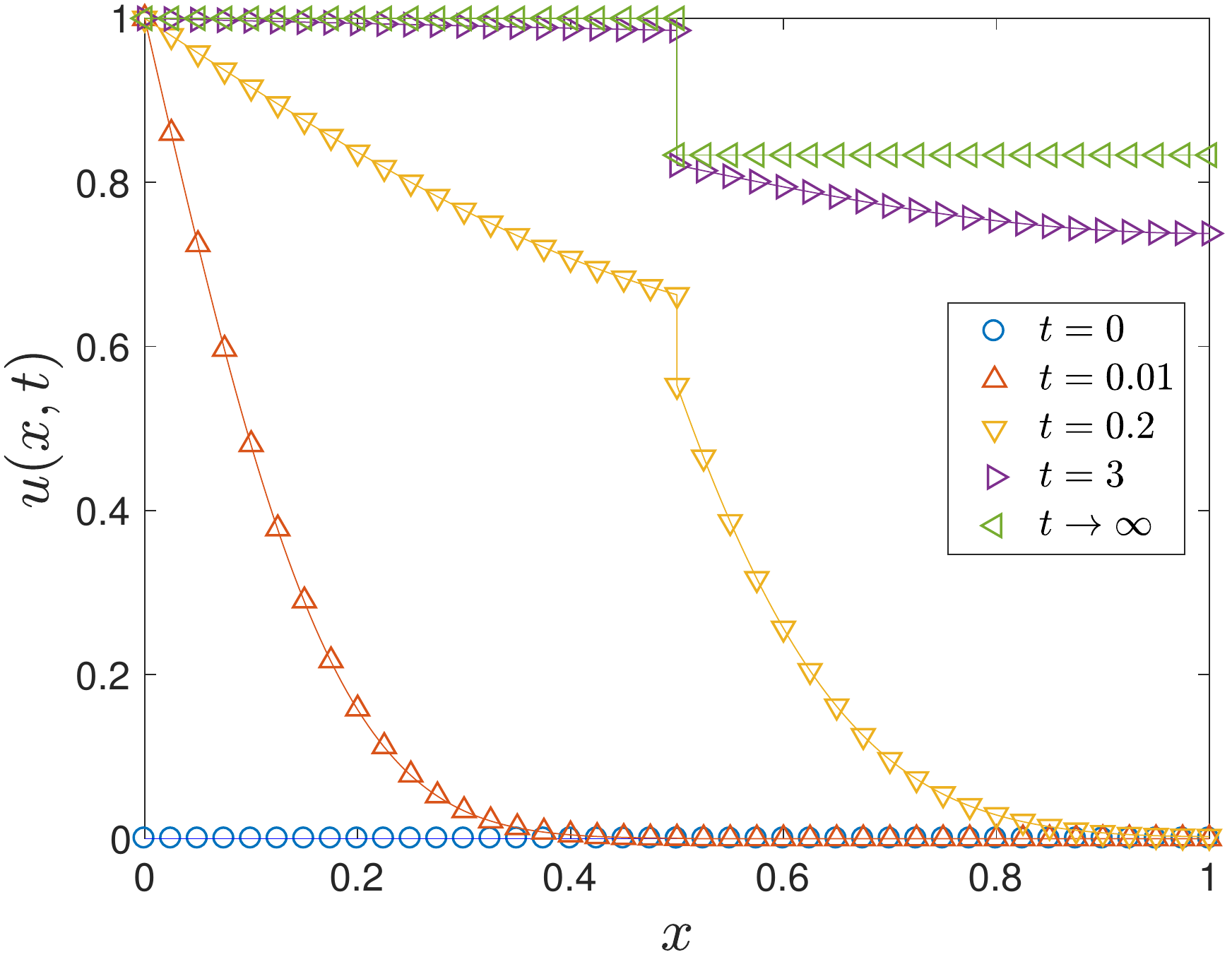}}
\subfloat[Case D]{\includegraphics[width=0.5\textwidth]{./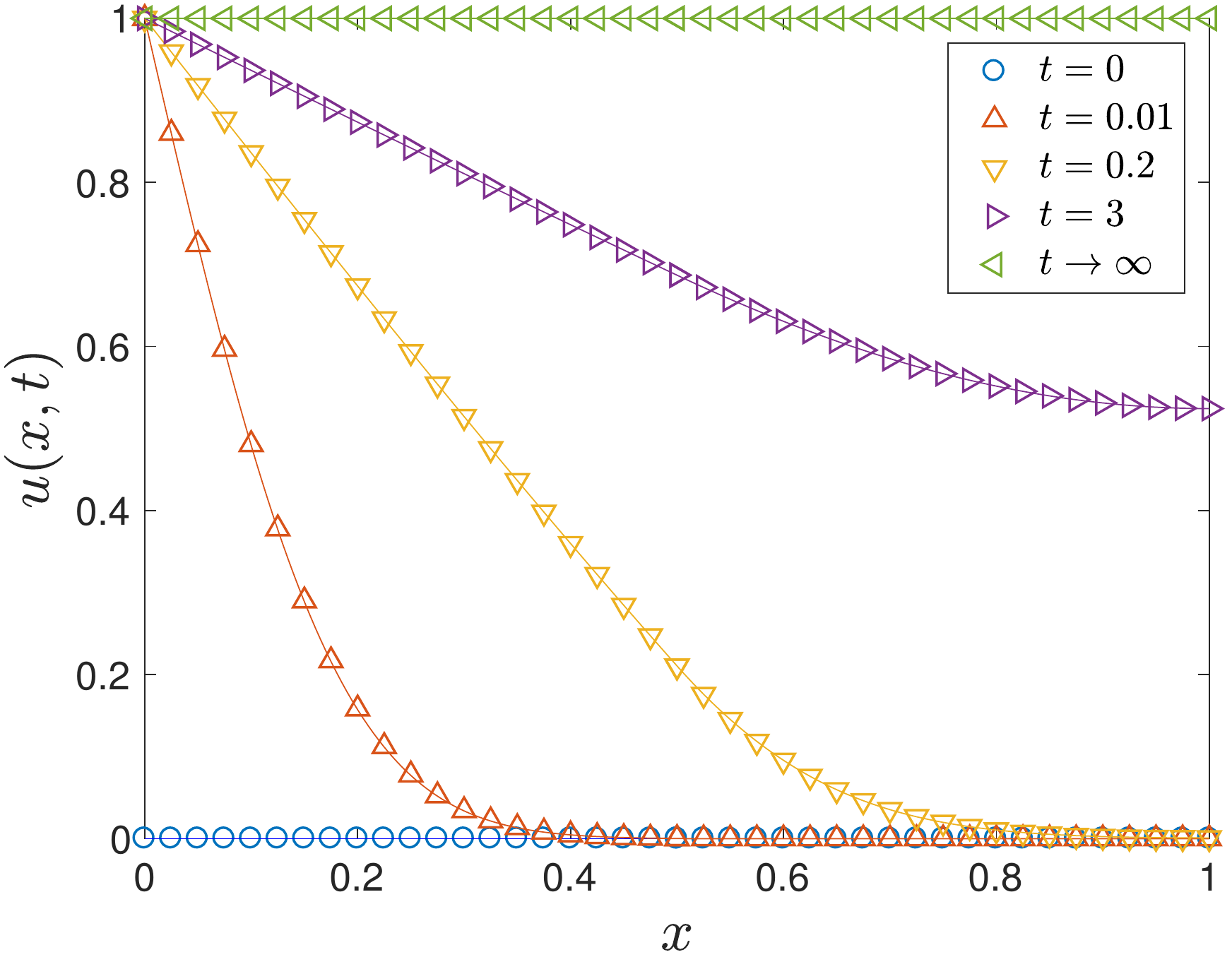}}
\caption{Solution verification of the new finite volume method for Cases A--D. Dots indicate the solution obtained using the  new finite volume scheme with a forward Euler temporal discretisation, node spacing $h_{i} = 0.025$ in both layers and a time step of $\tau = 10^{-4}$, while the continuous lines represent the analytical solution.}
\label{fig:CaseABCD}
\end{figure}

\revision{These four test cases provide a good test for our numerical schemes as they assess the different types of behaviour found at the interfaces in multilayer diffusion problems. This is evident in Figure \ref{fig:CaseABCD}, which provides the solution of each test case at selected points in time. Case A (Figure \ref{fig:CaseABCD}a) is a standard layered diffusion problem: the gradient of the solution is discontinuous at the interface as the conductivities in the layers, $\gamma_1$ and $\gamma_2$, are not equal. Case B (Figure \ref{fig:CaseABCD}b) mimics the physical situation of a thin resistive layer at the interface: the finite contact transfer coefficient, $H_1$, produces a discontinuity in the solution between the two layers with $u_{1}(l_{1},t)\neq u_{2}(l_{1},t)$. For this interface condition, the jump discontinuity $u_{1}(l_{1},t) - u_{2}(l_{1},t)$ is directly proportional to the gradient of $u_{1}(x,t)$ (or $u_{2}(x,t)$) at the interface \cite{carr_2016c} and therefore, as can be seen, decreases in magnitude as time progresses. For Case C (Figure \ref{fig:CaseABCD}c), the solution is discontinuous as the partition coefficient $\theta_1\neq 1$. For this problem, the jump discontinuity is directly proportional to the value of $u_{2}(l_{1},t)$ (or $u_{1}(l_{1},t)$) \cite{carr_2016c} and therefore increases in magnitude for increasing time. For Case D (Figure \ref{fig:CaseABCD}d), both the solution and gradient are continuous as $H_1 \rightarrow \infty$, $\theta_1= 1$ and $\gamma_1 = \gamma_2$.}

Additionally, Figure \ref{fig:CaseABCD} compares the numerical solution obtained using the new finite volume method to an analytical solution. The analytical solution is the \change{classical eigenfunction expansion solution (see, e.g., \cite[Appendix C]{carr_2016c})} truncated to include only the first 100 terms in the summation. In all cases, the finite volume solution is in excellent agreement with the analytical solution, which confirms the correctness of our treatment of the interface conditions. We compute the relative error as:
\begin{align}
\label{eq:relative_error}
{\text{Error}(t_{k})} = \frac{\max\limits_{i,j}\,\bigl|u_{i}(x_{i,j},t_{k}) -  u_{i,j}^{(k)}\bigr|}{\max\limits_{i,j}\,\bigl|u_i(x_{i,j},t_k)\bigr|},
\end{align}
\change{where we recall that $u_{i,j}^{(k)}$ is the discrete numerical approximation to the exact solution $u_{i}(x_{i,j},t_{k})$ and the maximum is taken over all layers ($i = 1,\hdots,m$) and all nodes ($j = 0,\hdots,n$).}

To determine the order of spatial accuracy for the finite volume schemes we investigate the reduction in the relative error (\ref{eq:relative_error}) as the grid spacing is reduced. This is carried out using a fixed time step $\tau$ and a uniform grid spacing across the entire domain: $h_{i} = h$ for all $i = 1,\hdots,m$. We solve Cases A--D using $\tau = 10^{-7}$ and $h = 2^{-3},2^{-4},\hdots,2^{-7}$ and compute the relative error {(\ref{eq:relative_error})} at $t_{k} = 0.2$ ($k = 2 \times 10^6$ time steps). The value of the time step is chosen to ensure that the spatial error dominates over the temporal error as we expect an error of $O(\tau + h^{2})$ for the forward Euler (\ref{eq:forward_euler}) and backward Euler (\ref{eq:backward_euler}) schemes and $O(\tau^{2}+h^{2})$ for the Crank-Nicolson (\ref{eq:crank_nicolson}) scheme.

In Table \ref{tab:FE_error}, the relative errors for each test case are tabulated for the chosen grid spacings and a forward Euler time discretisation. Similar results are given in Tables \ref{tab:BE_error} and \ref{tab:CN_error} for backward Euler and Crank-Nicolson. In each of these tables, we also compute the ratio of successive errors. For example, in Table \ref{tab:FE_error}, the value of 4.10 for Case A is calculated as 8.01e-03/1.95e-03. These ratios demonstrate that reducing the time step by a factor of two leads to a reduction in the relative error by approximately a factor of four for all three time discretisation methods. Therefore, we conclude that the new finite volume method is second-order accurate in space.

\begin{table}[H]
\centering
\begin{tabular}{|c|c|c|c|c|c|c|c|c|}
\hline
\textbf{Forward Euler} &  \multicolumn{2}{c|}{Case A} & \multicolumn{2}{c|}{Case B} & \multicolumn{2}{c|}{Case C} & \multicolumn{2}{c|}{Case D}\\
\hline
Node spacing & Error & Ratio & Error & Ratio & Error & Ratio & Error & Ratio \\
\hline
\rule{0pt}{2.5ex}$2^{-3}$ &    8.01e-03  & -        & 8.99e-03   & -      &  7.11e-03 & -	  & 1.13e-02  & - \\
$2^{-4}$ &    1.95e-03  & 4.10  & 1.94e-03   & 4.64 & 1.73e-03 & 4.10 & 2.50e-03  & 4.52 \\
$2^{-5}$ &    4.92e-04  & 3.97  & 4.63e-04   & 4.19 & 4.36e-04 & 3.97 & 6.06e-04  & 4.12 \\
$2^{-6}$ &    1.24e-04  & 3.98  & 1.13e-04   & 4.08 & 1.10e-04 & 3.98 & 1.50e-04  & 4.03 \\
$2^{-7}$ &    3.10e-05  & 3.99  & 2.80e-05   & 4.05 & 2.75e-05 & 3.99 & 3.75e-05  & 4.01 \\
\hline
\end{tabular}
\caption{\change{Relative errors and ratios of errors for the forward Euler scheme (\ref{eq:forward_euler}) using a time step of $\tau = 10^{-7}$ and different node spacing.}} 
\label{tab:FE_error}
\end{table}

\begin{table}[H]
\centering
\begin{tabular}{|c|c|c|c|c|c|c|c|c|}
\hline
\textbf{Backward Euler} &  \multicolumn{2}{c|}{Case A} & \multicolumn{2}{c|}{Case B} & \multicolumn{2}{c|}{Case C} & \multicolumn{2}{c|}{Case D}\\
\hline
Node spacing & Error & Ratio & Error & Ratio & Error & Ratio & Error & Ratio\\
\hline
\rule{0pt}{2.5ex}$2^{-3}$ &    8.01e-03  & -        & 8.99e-03   & -      &  7.11e-03 & -	  & 1.13e-02  & - \\
$2^{-4}$ &    1.95e-03  & 4.10  & 1.94e-03   & 4.64 & 1.73e-03 & 4.10 & 2.50e-03  & 4.52 \\
$2^{-5}$ &    4.92e-04  & 3.97  & 4.63e-04   & 4.19 & 4.36e-04 & 3.97 & 6.07e-04  & 4.11 \\
$2^{-6}$ &    1.24e-04  & 3.97  & 1.14e-04   & 4.08 & 1.10e-04 & 3.97 & 1.51e-04  & 4.03 \\
$2^{-7}$ &    3.12e-05  & 3.98  & 2.82e-05   & 4.03 & 2.76e-05 & 3.97 & 3.76e-05  & 4.01 \\
\hline
\end{tabular}
\caption{\change{Relative errors and ratios of errors for the backward Euler scheme (\ref{eq:backward_euler}) using a time step of $\tau = 10^{-7}$ and different node spacing.}}
\label{tab:BE_error}
\end{table}

\begin{table}[H]
\centering
\begin{tabular}{|c|c|c|c|c|c|c|c|c|}
\hline
\textbf{Crank-Nicolson} &  \multicolumn{2}{c|}{Case A} & \multicolumn{2}{c|}{Case B} & \multicolumn{2}{c|}{Case C} & \multicolumn{2}{c|}{Case D}\\
\hline
Node spacing & Error & Ratio & Error & Ratio & Error & Ratio & Error & Ratio\\
\hline
\rule{0pt}{2.5ex}$2^{-3}$ &    8.01e-03  & -        & 8.99e-03   & -      &  7.11e-03 & -	  & 1.13e-02  & - \\
$2^{-4}$ &    1.95e-03  & 4.10  & 1.94e-03   & 4.64 & 1.73e-03 & 4.10 & 2.50e-03  & 4.52 \\
$2^{-5}$ &    4.92e-04  & 3.97  & 4.63e-04   & 4.19 & 4.36e-04 & 3.97 & 6.07e-04  & 4.12 \\
$2^{-6}$ &    1.24e-04  & 3.97  & 1.14e-04   & 4.08 & 1.10e-04 & 3.97 & 1.51e-04  & 4.03 \\
$2^{-7}$ &    3.11e-05  & 3.98  & 2.81e-05   & 4.04 & 2.76e-05 & 3.98 & 3.75e-05  & 4.01 \\
\hline
\end{tabular}
\caption{\change{Relative errors and ratios of errors for the Crank-Nicolson scheme (\ref{eq:crank_nicolson}) using a time step of $\tau = 10^{-7}$ and different node spacing.}}
\label{tab:CN_error}
\end{table}

\subsection{Stability conditions}
In this section, we demonstrate the importance of accounting for the interface conditions when studying the stability restrictions placed on the time step. In particular, for the forward Euler scheme (\ref{eq:forward_euler}), we highlight the risk of choosing a time step that satisfies only the classical stability condition $\tau\leq h_{i}^{2}/(2D_{i})$ in each layer.

As a first example, consider Case B, as described in Section \ref{sec:numerical_experiments}, with a contact transfer coefficient of $H_{1} = 5$ and grid spacing $h_{1} = h_{2} = 0.025$. To ensure stability of this problem (which we \change{label} as Case E) under the forward Euler scheme (\ref{eq:forward_euler}), {it is sufficient for the time step to satisfy} the following constraints from Table \ref{tab:stability}:
\begin{alignat}{2}
\label{eq:caseE_timestep1}
&\text{Layer 1:} &\quad \tau &\leq \frac{h_{1}^{2}}{2D_{1}} = 3.125 \times 10^{-4},\\
\label{eq:caseE_timestep2}
&\text{Layer 2:} &\quad \tau &\leq \frac{h_{2}^{2}}{2D_{2}} = 3.125 \times 10^{-3},\\
\label{eq:caseE_timestep3}
&\text{Interface:}&\quad \tau & \leq \min\left\{\frac{2\gamma_1h_1^2}{2D_1\left[(1+\theta_1)H_1h_{1}+2\gamma_{1}\right]}, \frac{2\gamma_{2}h_{2}^2}{2D_{2}\left[(1+\theta_1)H_1h_{2}+2\gamma_{2}\right]}\right\} \change{=2.78\times 10^{-4}}.
\end{alignat}
Naively considering only the classical stability conditions (\ref{eq:caseE_timestep1}) and (\ref{eq:caseE_timestep2}) and choosing a time step of $3.125\times 10^{-4}$ leads to \change{an} unstable solution as $\rho(\mathbf{A}_{F}) = 1.00873$. However, correctly taking into account the \change{stability condition arising at the interface (\ref{eq:caseE_timestep3})} and choosing a time step of $\tau = 2.78\times 10^{-4}$ yields a spectral radius of $\rho(\mathbf{A}_{F}) = 0.99979$ and a stable solution as shown in Figure \ref{fig:Stability}a.

As already mentioned in Section \ref{sec:interface} the stability conditions (\ref{eq:coeff3}) and (\ref{eq:coeff4}) are always more restrictive than the classical stability conditions $\tau\leq h_{i}^{2}/(2D_{i})$ and $\tau\leq h_{i+1}^{2}/(2D_{i+1})$, respectively. Moreover, if \change{$(1+\theta_{i})H_{i}h_{i}$ and $(1+\theta_{i})H_{i}h_{i+1}$} are large then (\ref{eq:coeff3}) and (\ref{eq:coeff4}) are significantly more restrictive. Such a case occurs for problems with a small contact resistance ($1/H_{i}$) at the $i$th interface. Even though they are equivalent in the limit that $H_i \rightarrow \infty$, approximating Type GI conditions by Type GII conditions \change{by choosing a large value of $H_{i}$} is not recommended for the forward Euler scheme (\ref{eq:forward_euler}), as this leads to a very strict constraint on the time step.

\begin{figure}[htb]
\subfloat[Case E]{\includegraphics[width=0.5\textwidth]{./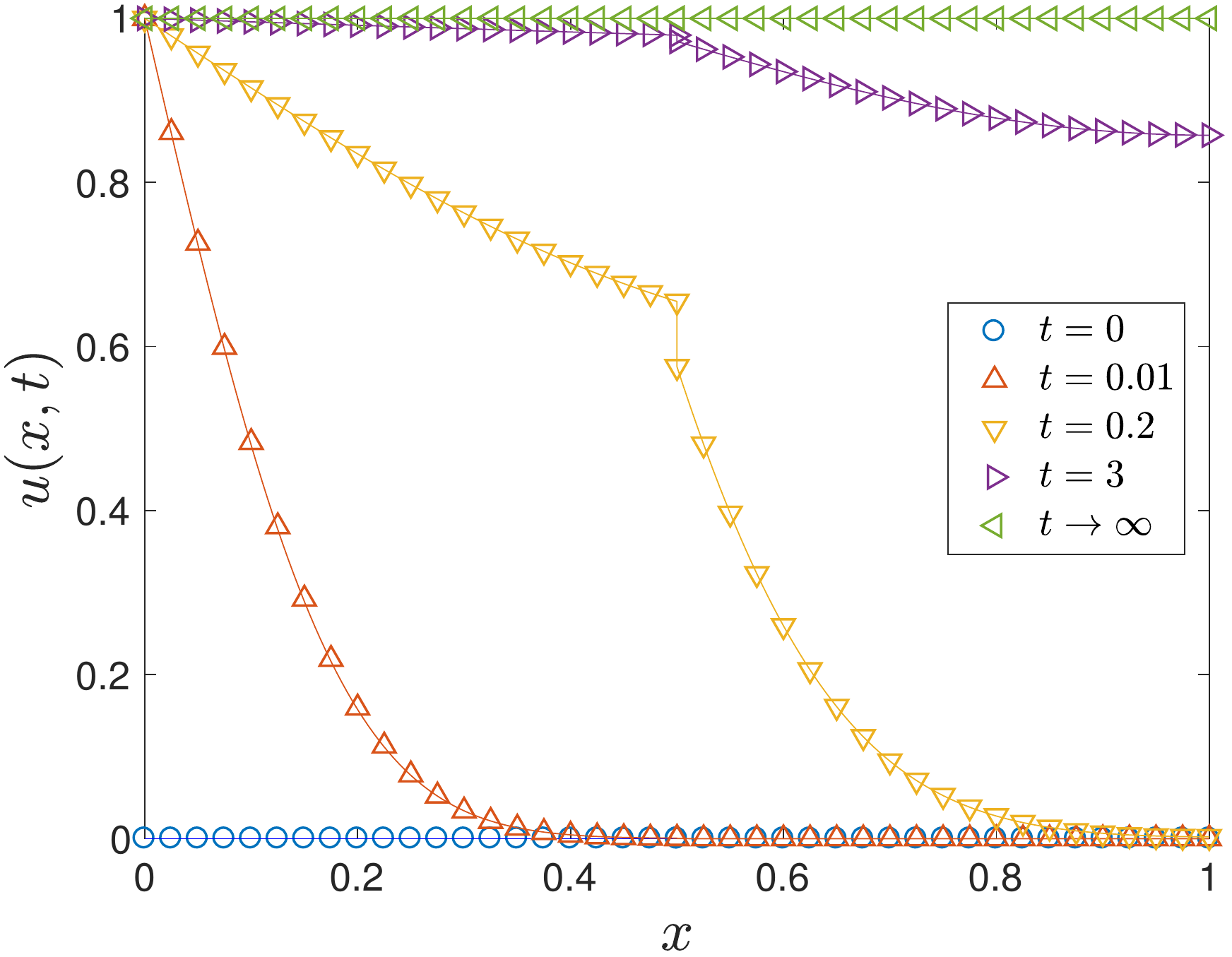}}
\subfloat[Case F]{\includegraphics[width=0.5\textwidth]{./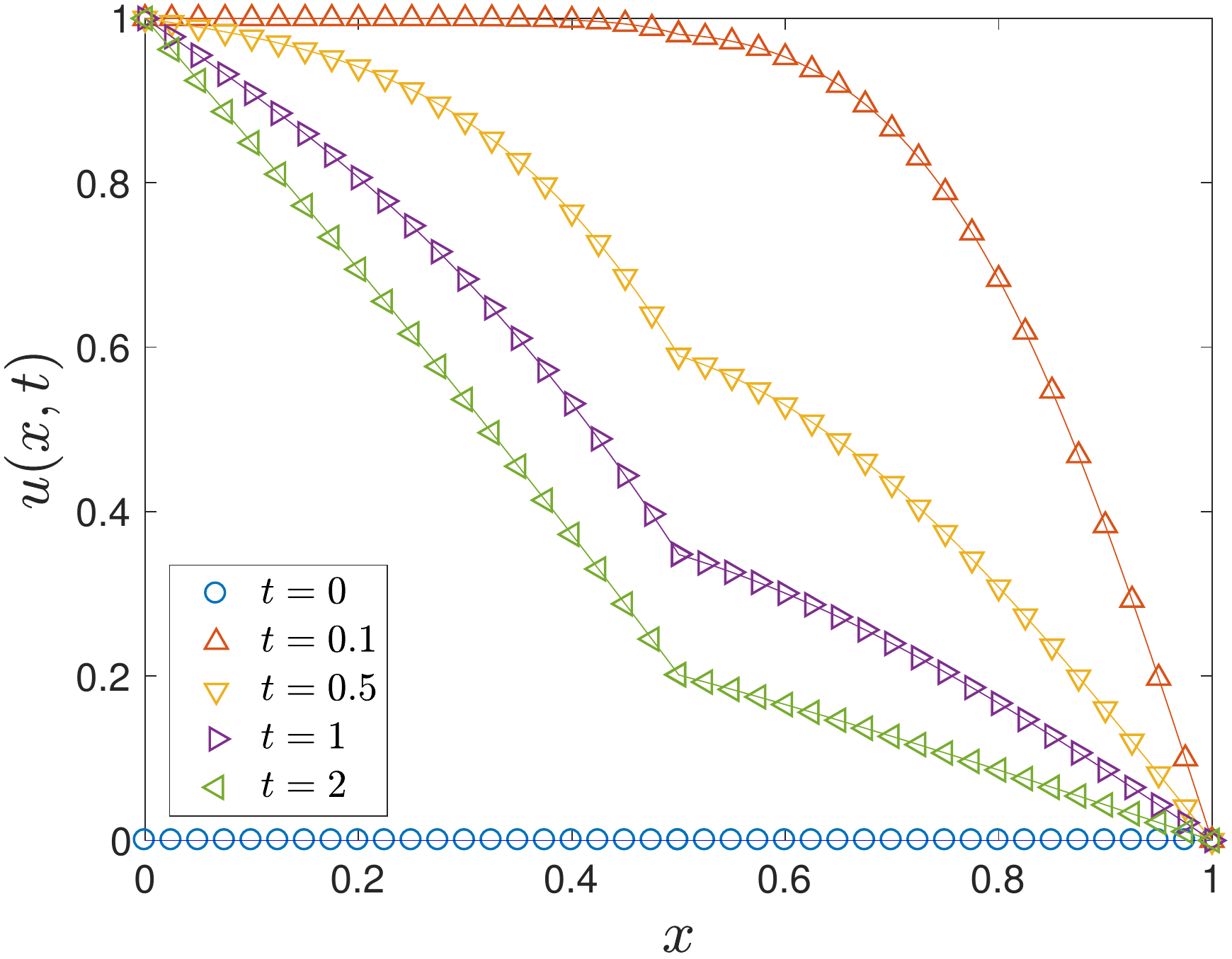}}
\centering
\caption{Numerical solution of Cases E and F using the forward Euler scheme (\ref{eq:forward_euler}) with node spacings $h_{1} = h_{2} = 0.025$ and the stable time steps of $\tau = 2.78 \times 10^{-4}$ and $\tau = 2.48\times 10^{-5}$, respectively.}
\label{fig:Stability}
\end{figure}

We conclude this section by presenting a problem (Case F) in which the stability condition arising from the interface conditions is approximately two orders of magnitude more restrictive than the classical stability condition (\ref{eq:tau_interior}). The problem consists of $m = 2$ layers with diffusivities $D_1 = 0.1$ and $D_2 = 0.2$, conductivities $\gamma_1 = 10^{-4}$ and $\gamma_2 = 5 \times 10^{-4}$ and node spacings $h_1 = h_2 = 0.025$. The domain under consideration is $[l_0,l_1,l_2] = [0,0.5,1]$ with Dirichlet conditions imposed on both boundaries $a_L = 1$, $b_L = 0$, $c_L = 1$, $a_R = 1$, $b_R = 0$ and $c_R = 0$, and Type II conditions (\ref{eq:type2interface1})--(\ref{eq:type2interface2}), with contact transfer coefficient $H_1 = 0.5$, imposed at the interface. 

For this problem, taking the maximum time step satisfying the classical stability condition ({\ref{eq:tau_interior}) in both layers, that is $\tau = \min\{h_{1}^{2}/(2D_{1}),h_{2}^{2}/(2D_{2})\} = 1.5625\times 10^{-3}$ yields an unstable solution as the spectral radius $\rho(\mathbf{A}_F) = 87.146$. Conversely, the maximum time step satisfying the stability conditions (\ref{eq:coeff3}) and (\ref{eq:coeff4}), $\tau = 2.48\times 10^{-5}$, is almost 100 times smaller and yields a stable solution (given in Figure \ref{fig:Stability}b) as $\rho(\mathbf{A}_F) = 0.9996$.

\section{Conclusions and summary}
\label{sec:conclusion}
This paper has developed a new finite volume method for the one-dimensional multilayer diffusion problem, capable of treating problems with general boundary/interface conditions. The new method is second-order accurate in space and, unlike existing schemes in the literature, preserves the tridiagonal matrix structure of the classical single-layer discretisation. Stability and convergence analysis of the method was presented for the three classical time discretisation schemes: forward Euler, backward Euler and Crank-Nicolson. \change{Notably, we demonstrated that the backward Euler and Crank-Nicolson schemes are always unconditionally stable. We also found that for the forward Euler scheme} certain types of interface conditions can lead to more restrictive stability conditions than simply applying the \change{classical stability} condition $\tau\leq h_{i}^{2}/(2D_{i})$ in each layer (where $D_{i}$ is the diffusivity and $h_{i}$ is the grid spacing in the $i$th layer). \change{In particular, we showed that Type GI interface conditions (\ref{eq:general_perfect1})--(\ref{eq:general_perfect2}) may lead to a more restrictive stability condition if $\theta_{i}\neq 1$ and Type GII interface conditions (\ref{eq:general_imperfect1})--(\ref{eq:general_imperfect2}) always lead to a more restrictive stability condition.} Numerical experiments confirmed the \change{second-order} spatial accuracy of the new finite volume schemes and confirmed the stability analysis. 

\revision{In this paper, we have derived finite volume schemes for linear multilayer diffusion processes without a source term. Modification of these schemes for treating multilayer reaction-diffusion problems with nonlinear reaction terms can be carried out in a straightforward manner. Moreover, when using an explicit treatment of the reaction term \cite{ruuth_1995}, it may be possible to extend our stability conditions by adapting analysis presented for the single-layer reaction-diffusion problem \cite{cherruault_1990}.}
  
Finally, in the future, we plan to investigate the extension of the finite volume method presented in this paper to higher dimensions.

\section*{Acknowledgements}
\noindent EJC acknowledges funding from the Australian Research Council (DE150101137). {Both authors acknowledge helpful discussions with Ian Turner (QUT) on spectral properties of tridiagonal matrices.}

\bibliographystyle{plainnat}
\bibliography{references}

\end{document}